\documentclass[a4paper, twoside,12pt]{article}
\usepackage{fancyhdr}
\usepackage{fnpos}
 \usepackage[english]{babel}        
\usepackage[T1]{fontenc}          
\usepackage{graphicx}             
\usepackage{makeidx}
\usepackage{fancybox}
\usepackage{framed}
\usepackage{fancyhdr}
 \usepackage{pstricks,pst-plot,pstricks-add}
\usepackage[margin=1in]{geometry}
\usepackage{graphicx}
\usepackage{titlesec}
\usepackage{amsmath}
\usepackage{amsfonts}   
\usepackage{amssymb}    
\usepackage{amsthm}
\usepackage{dsfont}
\usepackage{mathtools}
\usepackage{verbatim}   
\usepackage{color}
\usepackage{listings}
\usepackage{graphicx}
\usepackage{amsmath}
\usepackage{amsmath,amssymb,color,inputenc,euscript,graphicx,psfrag}


\setlength{\topmargin}{-0.3in}
\setlength{\topskip}{0.3in}
\setlength{\textheight}{9.5in}
\setlength{\textwidth}{6in}
\setlength{\oddsidemargin}{0.1in}
\setlength{\evensidemargin}{0.1in}


\newtheorem{thm}{Theorem}[section]
\newtheorem{cor}[thm]{Corollary}
\newtheorem{lem}[thm]{Lemma}
\newtheorem{prop}[thm]{Proposition}

\numberwithin{equation}{section}

\renewcommand{\thefootnote}

 \pagestyle{fancy} \pagestyle{myheadings}{  }{  }



\author { B\'echir Amri$^*$ and  Khawla Kerfef$^{**}$  }

\title{ A  Generalized discrete  Riesz  transforms}
\date{ }

 \begin{document}
 \maketitle
\begin{center}
   $^*$Taibah University, College of Sciences, Department of Mathematics, P. O. BOX 30002, Al Madinah AL Munawarah, Saudi Arabia.\\
\textbf{ e-mail:} bechiramri69@gmail.com\\
  $^{**}$Universit\'{e} Tunis El Manar, Facult\'{e} des sciences de Tunis,\\ Laboratoire d'Analyse Math\'{e}matique
       et Applications,\\ LR11ES11, 2092 El Manar I, Tunisie.\\
     \textbf{ e-mail:} kh.karfaf@gmail.com
\end{center}
\begin{abstract}
In this paper, we  introduce a discrete Riesz transforms   associated with the  non-symmetric  trigonometric Heckman-Opdam polynomials of type $A_1$.  We prove that they can be extended to a bounded
operators on $\ell^p(\mathbb{Z})$,   $1<p<\infty$.
\footnote{\noindent\textbf{Key words and phrases:} Heckman-Opdam polynomials,  discrete Riesz transforms.  \\
\textbf{Mathematics Subject Classi
fication}.Primary 39A12; Secondary  47B38. }
 \end{abstract}
\section{Introduction}
Fourier series constitute a fundamental tool in the study of periodic functions.
It is from this concept that a  branch of mathematics known as harmonic analysis was  developed. There are
many generalizations of Fourier series that have proved to be useful and  are all special cases of decompositions over an orthonormal basis
 of an inner product space.  In this work,  we consider the Fourier  series  generated by the non-symmetric  trigonometric Heckman-Opdam polynomials of type $A_1$ which  contains  as special case (k=0) the classical Fourier  series. Generally,  the Non-symmetric Heckman-Opdam polynomials are     family of orthogonal
polynomials  associated with a root system, introduced by  Opdam \cite{O1} as eigenfunctions of Cherednik operators.
In particular,  for  root system of type  $A_1$ these  polynomials are reduced  to the non-symmetric  ultraspherical or Gegenbauer polynomials.
\par The aim of this paper is to establish the  $\ell^p$-boundedness of the discrete  Riesz transforms associated with the  non-symmetric Heckman-Opdam polynomials.
We prove that they  are a   Calder\'{o}n-Zygmund  operators on the discrete homogeneous space $\mathbb{Z}$. The key
point for this consideration is  a discrete version of H\"{o}rmander type conditions given in \cite{B}, and closely connect with  a  continuous  version  given in \cite{BA}. We recall   that   discrete   Riesz transforms  for  ultraspherical   polynomials are recently studied in \cite{B} and  in \cite{AA} for Jacobi polynomials.
  \section{Non-symmmetric Heckmann-Opdam polynomials  of type $A_1$ }
\subsection{Definitions and properties}
For the theory of   Heckmann-Opdam polynomials associated to general root systems,  we refer to   \cite{HO2}.
\par Let $k\geq 0$, the Cherednick operator of type $A_1$ is defined by
\begin{eqnarray}\label{T}
T^{k}(f)(x)=f'(x)+ 2k\;\frac{ f(x)-f(-x)}{1-e^{-2x}}-   k f(x), \qquad f\in C^1(\mathbb{R}).
 \end{eqnarray}
 We introduce  the  weight function
$$\delta_k(x)=|2\sin x |^{2k}   $$
and the inner product on $L^2([0,2\pi),\delta_k(x)dx)$
$$(f,g)_k=\frac{1}{2\pi}\int_{0}^{2\pi}f(x)\overline{g(x)}\delta(x)dx,$$
with its associate norm $\|.\|_k$.
  \par Defining the   partial ordering $\triangleleft$ on $\mathbb{Z}$ as follows
\begin{equation*}
  j\triangleleft n  \Leftrightarrow \left\{
    \begin{array}{ll}
    |j| < |n|\;\text{and} \;|n|-|j|\in 2\mathbb{Z}^+   , & \hbox{} \\
    or   , & \hbox{} \\
      |j| = |n|\;\text{and} \; n <j . & \hbox{.}
    \end{array}
  \right.
\end{equation*}

The non-symmetric  Heckman-Opdam polynomials $E_n^k$, $n\in \mathbb{Z}$  are  defined by the following conditions:
\begin{eqnarray}\label{def}
 (a)&&    E_n^{k}(x)=e^{nx}+ \sum_{j\triangleleft n }c_{n,j}\; e^{jx}\qquad\qquad\qquad\qquad\qquad\qquad\qquad\qquad\qquad
 \\ (b) && (E_n^{k}(ix),e^{ijx})_{k}=0,\quad \text{for any } \; j\triangleleft n.
\end{eqnarray}
The  polynomials $E_n^k$ diagonalize simultaneously the  Cherednik operators  and
\begin{equation}\label{TE}
  T^{k} ( E_n^{k})=  \widetilde{n} \; E_n^{k}, \qquad n\in \mathbb{Z},
\end{equation}
 where $ \tilde{n}=n+k $ if $n\geq 0$ and  $\tilde{n}=n-k$ if $n<0$.
Clearly,
\begin{equation}\label{13}
 E_0^k(x)=1,\quad \text{and}\quad E_1^k(x)=e^x.
\end{equation}
 A a consequence the trigonometric polynomials
$\{ E_n^{k}(ix) \}_{n\in \mathbb{Z}}$    is an orthogonal basis of   $L^2([0,2\pi),\delta_k(x)dx)$.\\
 \par The Heckman-Opdam- Jacobi  polynomials  $P_n^k$, $n\in \mathbb{Z}_+$ is given  by
  $$P_n^{k}(x) = E_n^{k}(x)+E_n^{k}(-x).$$
and they satisfy
$$ (T^k)^2P_n^k=(n+k)^2 P_n^k.$$
In particular  if we denote $L_k$ the differential operator
$$L_k(f)=f''+2k\; \frac{1+e^{-2x}}{1-e^{-2x}}\;f'(x)+k^2f(x)$$
then $P_n^k$ is the eigenfunction of $L_k$, namely that
$$L_k(P_n^k)=(n+k)^2P_n^k.$$
The Heckman opdam- Jacobi  polynomials is expressed via the hypergeometric function $_2F_1$
by
$$P_n^k(x)=P_n^k(0)\;_2F_1(n+2k,-n,k+1/2,\sinh^2(x/2))$$
where for all $n\in \mathbb{Z}^+$
$$ P_n^k(0)=2 E_n^{k}(0)= \frac{\Gamma(k)\Gamma(n+2k)}{\Gamma(2k)\Gamma(n+k)}.$$
Noting also that
$$  E_{-n}^k(0)= \frac{\Gamma(k)\Gamma(n+2k+1)}{2\Gamma(2k)\Gamma(n+k+1)}, \quad n>0.$$
 We have  following relationship
\begin{equation*}\label{jacobi}
   E_n^k(x)=E_n^k(0)\Big\{ P_{|n|}^k( x)+\frac{\tilde{n}+2k}{2k+1}\sinh x \;P_{|n|-1}^{k+1}(x)\Big\}, \quad n\in\mathbb{Z}.
\end{equation*}
 \par It is also  worth mentioning that  the non-symmetric  Heckman-Opdam polynomials $E_n^k$ are closely  related to non-symmetric
Jack polynomials  see \cite{Sah}.
If  $\lambda=(\lambda_1,\lambda_2)\in \mathbb{Z}\times \mathbb{Z} $ and $\mathcal{N}_{\lambda }^k$ the correspondent non-symmetric
Jack polynomials then
$$E_{\lambda_2-\lambda_1}^k= \mathcal{N}_{\lambda}^k(e^{-x},e^{x}).$$
 An important  result due to  Sahi \cite{Sah} states that the coefficients  $c_{n,j} $ in  (\ref{def}) are all nonnegative.
\par The  upcoming propositions are  inspired by the work of \cite{Sah}.
\begin{prop}
 For all $n\in \mathbb{Z}$, we have
\begin{equation}\label{12}
   E_{n+1}^k(x)=e^xE_{-n}^k(-x).
\end{equation}
\end{prop}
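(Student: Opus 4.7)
The strategy is to verify that $F(x):=e^xE_{-n}^k(-x)$ satisfies the two defining properties (\ref{def})(a)--(b) of $E_{n+1}^k$. Equivalently, I will show that $F$ has leading term $e^{(n+1)x}$ in the $\triangleleft$-triangular expansion and is an eigenfunction of the Cherednik operator $T^k$; uniqueness of the non-symmetric Heckman--Opdam polynomial with prescribed leading monomial and $T^k$-eigenvalue then forces $F=E_{n+1}^k$.

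For the leading-term condition, I would substitute $E_{-n}^k(x)=e^{-nx}+\sum_{j\triangleleft -n}c_{-n,j}e^{jx}$ into $F$, obtaining
\[
F(x)=e^{(n+1)x}+\sum_{j\triangleleft -n}c_{-n,j}e^{(1-j)x},
\]
and then check combinatorially that $j\triangleleft -n$ implies $1-j\triangleleft n+1$, splitting into the cases $n>0$, $n=0$, and $n<0$. The boundary case of the ordering ($|j|=|n|$) needs separate treatment because the shift $j\mapsto 1-j$ produces precisely those indices that lie on the equality branch $|1-j|=|n+1|$ of $\triangleleft$.

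For the eigenvalue condition, I would first derive the transport identity
\[
T^k(e^xg)(x)=e^xT^kg(x)+e^xg(x)+2k\,e^xg(-x),
\]
which follows from $(e^xg)'=e^x(g+g')$ together with the algebraic rearrangement $\frac{e^xg(x)-e^{-x}g(-x)}{1-e^{-2x}}=e^x\bigl[\frac{g(x)-g(-x)}{1-e^{-2x}}+g(-x)\bigr]$. Next, setting $H(x):=E_{-n}^k(-x)$, substituting $y=-x$ in the eigenvalue equation (\ref{TE}) for $E_{-n}^k$, and using the elementary simplification $\frac{1}{1-e^{2x}}+\frac{1}{1-e^{-2x}}=1$, one obtains $T^kH(x)=-\widetilde{-n}\,H(x)-2k\,H(-x)$. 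Feeding $g=H$ into the transport identity, the two contributions proportional to $H(-\cdot)$ cancel exactly, leaving $T^kF=(1-\widetilde{-n})\,F$. A short case analysis on $\operatorname{sgn}(n)$ then confirms $1-\widetilde{-n}=\widetilde{n+1}$, matching the eigenvalue dictated by the leading monomial.

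The step I expect to be the main obstacle is the combinatorial verification that $j\triangleleft -n$ implies $1-j\triangleleft n+1$: because $\triangleleft$ has two branches and the map $j\mapsto 1-j$ can transport an interior index of the first branch onto the boundary of the second, one has to track parities and signs carefully across the three regimes of $n$. Once this is settled, the transport identity together with (\ref{TE}) makes the eigenvalue computation essentially mechanical, and uniqueness closes the argument.
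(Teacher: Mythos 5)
Your proposal is correct and, at its core, takes the same route as the paper: show that $F(x)=e^{x}E_{-n}^{k}(-x)$ is a $T^{k}$-eigenfunction with eigenvalue $\widetilde{n+1}$ and then pin it down by its top coefficient. Your transport identity $T^{k}(e^{x}g)=e^{x}T^{k}g+e^{x}g+2k\,e^{x}g(-\,\cdot\,)$ combined with $T^{k}\bigl(E_{-n}^{k}(-\,\cdot\,)\bigr)(x)=-\widetilde{-n}\,E_{-n}^{k}(-x)-2k\,E_{-n}^{k}(x)$ is exactly the computation the paper carries out inline, just packaged more cleanly, and the cancellation you describe is right. The substantive remark is that the step you flag as the main obstacle, namely verifying that $j\triangleleft -n$ implies $1-j\triangleleft n+1$, is not needed. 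Once $T^{k}F=\widetilde{n+1}\,F$ is established, the simplicity of the spectrum of $T^{k}$ on Laurent polynomials in $e^{x}$ (the $\widetilde{m}$ are pairwise distinct and the $E_{m}^{k}$ form a basis) already forces $F=c\,E_{n+1}^{k}$; to see $c=1$ you only need that the coefficient of $e^{(n+1)x}$ in $F=e^{(n+1)x}+\sum_{j\triangleleft -n}c_{-n,j}e^{(1-j)x}$ equals $1$, i.e.\ that no $j\triangleleft -n$ satisfies $1-j=n+1$, which is immediate since that would force $j=-n$. This is precisely what the paper means by ``comparing the highest coefficient,'' and it spares you the two-branch case analysis of $\triangleleft$ entirely. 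One caveat for your sign analysis of $1-\widetilde{-n}=\widetilde{n+1}$: since $T^{k}(1)=-k$, the eigenvalue of $E_{0}^{k}$ is $-k$, so the convention must be read as $\widetilde{0}=-k$ (the paper's stated $n\geq 0$ versus $n<0$ split is off by one at $n=0$); the paper avoids the issue by checking $n=0,-1$ directly from (\ref{13}), and you should do the same or adopt the corrected convention.
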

\begin{proof}
In view of (\ref{13}) this identity is true    for $n=-1,0$. Let $n\in \mathbb{Z}$, $n\neq -1,0$. Put
 $H_n^k(x)= e^xE_{-n}^k(-x)$, it is  enough to check that $ T^k(H_n)=(\widetilde{n+1})H_n$. We have
\begin{eqnarray*}
&&T^k(H_n)(x)
\\ &&\quad =e^xE_{-n}^k(-x)-e^x(E_{-n}^k)'(-x)+2k \; \frac{e^xE_{-n}^k(-x)-e^{-x}E_{-n}^k(x)}{1-e^{-2x}}-ke^xE_{-n}^k(-x)
\\&&\quad= -e^x \left((E_{-n}^k)'(-x)+2k\;\frac{E_{-n}^k(-x)-E_{-n}^k(x)}{1-e^{2x}} -k E_{-n}^k(-x) \right)+  e^xE_{-n}^k(-x)
\\&&\quad= ( 1- (\widetilde{-n}))e^xE_{-n}(-x)=(\widetilde{n+1})e^xE_{-n}(-x)=(\widetilde{n+1})H_n(x).
\end{eqnarray*}
Then (\ref{12})  follows  by comparing the highest coefficient.
\end{proof}
\begin{prop}
 For $n \in \mathbb{Z}^+$, $n\neq 0$, we have
\begin{equation}\label{45}
 E_{-n}^k(x)=E_{n}^k(-x) +\frac{k}{n+k}E_n^k(x).
\end{equation}
  \end{prop}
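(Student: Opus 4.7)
My plan is to mimic the strategy used in the previous proposition: define the candidate
$$H_n(x) := E_n^k(-x)+\frac{k}{n+k}E_n^k(x),$$
verify that it is an eigenfunction of $T^k$ with eigenvalue $\widetilde{-n}=-(n+k)$, and then identify it with $E_{-n}^k$ by comparing the leading exponential term in the $\triangleleft$-expansion.

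The core step is to compute $T^k$ applied to $F(x):=E_n^k(-x)$. Using the definition (\ref{T}),
$$T^k(F)(x) = -(E_n^k)'(-x)+2k\,\frac{E_n^k(-x)-E_n^k(x)}{1-e^{-2x}}-kE_n^k(-x).$$
I would then replace $(E_n^k)'(-x)$ using the eigenvalue identity (\ref{TE}) for $E_n^k$ evaluated at the point $-x$, which gives
$$(E_n^k)'(-x)=(n+2k)E_n^k(-x)-2k\,\frac{E_n^k(-x)-E_n^k(x)}{1-e^{2x}}.$$
Plugging this back into $T^k(F)(x)$ and applying the elementary identity
$$\frac{1}{1-e^{2x}}+\frac{1}{1-e^{-2x}}=1,$$
the two rational kernels collapse cleanly, and I expect to land on
$$T^k(F)(x)=-(n+k)E_n^k(-x)-2kE_n^k(x).$$
Combining with $T^k(E_n^k)=(n+k)E_n^k$, the coefficient $\frac{k}{n+k}$ is chosen precisely so that the spurious $-2kE_n^k(x)$ term is absorbed, yielding $T^k(H_n)=-(n+k)H_n=\widetilde{-n}\,H_n$.

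To conclude $H_n=E_{-n}^k$, I would observe that by (\ref{def}) the expansion of $H_n$ takes the form $e^{-nx}+\sum_{j\triangleleft -n}a_j e^{jx}$: the leading monomial $e^{-nx}$ comes from $E_n^k(-x)$ with coefficient $1$, the term $\frac{k}{n+k}e^{nx}$ from $\frac{k}{n+k}E_n^k(x)$ sits at the index $j=n$ which satisfies $n\triangleleft -n$, and all remaining exponents have absolute value strictly less than $n$ with the correct parity. Since $T^k$ is triangular with respect to the order $\triangleleft$ and the diagonal entries $\tilde\jmath$ are distinct from $\widetilde{-n}$ for all $j\triangleleft -n$, the eigenfunction of $T^k$ at eigenvalue $\widetilde{-n}$ with leading coefficient $1$ is unique, hence $H_n=E_{-n}^k$, which is (\ref{45}).

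The only point I would flag as requiring care is the collapsing of the two kernel terms: one must handle the signs in $(E_n^k)'(-x)$ versus $\frac{d}{dx}E_n^k(-x)$ correctly, and verify the rational identity above. Once that is in place, the rest is a normalization matching entirely analogous to the end of the proof of the preceding proposition.
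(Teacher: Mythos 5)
Your proposal is correct and follows essentially the same route as the paper: both hinge on the identity $T^k\bigl(E_n^k(-\cdot)\bigr)(x)=-(n+k)E_n^k(-x)-2kE_n^k(x)$ (which the paper merely asserts and you actually derive), then check that $H_n$ is an eigenfunction with eigenvalue $\widetilde{-n}=-(n+k)$ and identify it with $E_{-n}^k$ by matching the leading term in the $\triangleleft$-expansion. Your verification of the kernel-collapsing identity and of the triangularity of $H_n$ with respect to $\triangleleft$ fills in details the paper leaves implicit, but the argument is the same.
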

\begin{proof}
Observe that  for $n\geq 1$ we have $T^k(E_n(-x))=-2kE_n(x)- (n+k)E_n(-x)$. Then
\begin{eqnarray*}
  T^k\left( E_n^k(-x)+\frac{k}{n+k}E_n^k(x)\right)&=&-2kE_{n}(x)- (n+k)E_n(-x)+ kE_{n}(x)
\\&=&-kE_{n}(x)-(n+k)E_n^k(-x)
\\&=&-(n+k)\left(E_n^k(-x)+\frac{k}{n+k}E_n^k(x)\right).
 \end{eqnarray*}
We conclude the  (\ref{45})  by comparing the highest coefficient.
\end{proof}
From (\ref{45}) one can deduce the  following identity.
\begin{cor}
  For $n \in \mathbb{Z}^+$, we have
\begin{equation}\label{56}
 \left(1-\frac{k^2}{(n+k)^2}\right)E_n^k(x)=E_{-n}^k(-x)-\frac{k}{n+k}\;E_{-n}^k(x).
\end{equation}
\end{cor}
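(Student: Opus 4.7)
\medskip

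The plan is to use the identity (\ref{45}) twice, once at the argument $x$ and once at the argument $-x$, and then solve the resulting linear system for $E_n^k(x)$ and $E_n^k(-x)$ in terms of the quantities $E_{-n}^k(x)$ and $E_{-n}^k(-x)$.

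Concretely, first I would record (\ref{45}) as
\begin{equation*}
E_{-n}^k(x)=E_n^k(-x)+\frac{k}{n+k}\,E_n^k(x),
\end{equation*}
and substitute $x\mapsto -x$ in it to obtain
\begin{equation*}
E_{-n}^k(-x)=E_n^k(x)+\frac{k}{n+k}\,E_n^k(-x).
\end{equation*}
Solving the first equation for $E_n^k(-x)$ gives $E_n^k(-x)=E_{-n}^k(x)-\tfrac{k}{n+k}E_n^k(x)$, and plugging this into the second equation eliminates $E_n^k(-x)$ and leaves an expression for $E_{-n}^k(-x)$ in terms of $E_n^k(x)$ and $E_{-n}^k(x)$. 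Collecting the coefficient of $E_n^k(x)$ produces the factor $1-\tfrac{k^2}{(n+k)^2}$, and transposing the term involving $E_{-n}^k(x)$ to the right-hand side gives exactly (\ref{56}).

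There is no real obstacle here beyond bookkeeping: the identity is a purely algebraic consequence of (\ref{45}). The only small point worth verifying is that the step is legitimate for all $n\in\mathbb{Z}^+$ with $n\neq 0$ (so that $n+k\neq 0$, which is automatic since $k\geq 0$), ensuring that the fraction $\tfrac{k}{n+k}$ is well-defined and that we may freely invoke (\ref{45}) at both $x$ and $-x$.
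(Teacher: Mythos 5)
Your proposal is correct and is exactly the intended derivation: the paper states the corollary with only the remark ``From (\ref{45}) one can deduce the following identity,'' and applying (\ref{45}) at both $x$ and $-x$ and eliminating $E_n^k(-x)$ is precisely how that deduction goes. The only cosmetic point is that for $n=0$ both sides of (\ref{56}) vanish identically (and (\ref{45}) is only stated for $n\neq 0$), so the statement ``for $n\in\mathbb{Z}^+$'' is still fine.
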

  \begin{prop}
  For all $n  \in \mathbb{Z}^+$, we have
 $$ \|E_{n+1}^k\|_k^2=\|E_{-n}^k\|_k^2= n!\; \frac{\Gamma(n+2k+1)}{\Gamma(n+k+1)^2}. $$
\end{prop}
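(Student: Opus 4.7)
I would work on the imaginary axis, where the remark after \eqref{13} tells us that $\{E_m^k(ix)\}_{m\in\mathbb{Z}}$ is a $(\cdot,\cdot)_k$-orthogonal basis, and combine the two identities \eqref{12} and \eqref{45} into a one-step recursion on $n$.

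The first equality is essentially free from \eqref{12}. Evaluating that identity at $ix$ gives $E_{n+1}^k(ix)=e^{ix}E_{-n}^k(-ix)$, so $|E_{n+1}^k(ix)|=|E_{-n}^k(-ix)|$; since $\delta_k$ is even and $2\pi$-periodic, the change of variable $x\mapsto -x$ on $[0,2\pi)$ yields $\|E_{n+1}^k\|_k^2=\|E_{-n}^k\|_k^2$.

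The substance is the recursion
$$\|E_{-n}^k\|_k^2=\frac{n(n+2k)}{(n+k)^2}\,\|E_n^k\|_k^2,\qquad n\geq 1.$$
Because the coefficients $c_{n,j}$ in \eqref{def} are real, $\overline{E_n^k(ix)}=E_n^k(-ix)$ on $\mathbb{R}$, and so identity \eqref{45} read at $ix$ becomes $E_{-n}^k(ix)=\overline{E_n^k(ix)}+\tfrac{k}{n+k}E_n^k(ix)$. Expanding $(E_{-n}^k(i\cdot),E_{-n}^k(i\cdot))_k$ produces a diagonal contribution $\bigl(1+k^2/(n+k)^2\bigr)\|E_n^k\|_k^2$ together with two conjugate cross terms, each a real multiple of $I:=\frac{1}{2\pi}\int_0^{2\pi}E_n^k(ix)^2\,\delta_k(x)\,dx$. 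The crux of the argument is the evaluation of $I$: rewriting $E_n^k(-ix)=E_{-n}^k(ix)-\tfrac{k}{n+k}E_n^k(ix)$ via \eqref{45} a second time and invoking the orthogonality $(E_n^k(i\cdot),E_{-n}^k(i\cdot))_k=0$ (valid since $n\neq -n$) yields $I=-\tfrac{k}{n+k}\|E_n^k\|_k^2$. Substituting back gives exactly the announced ratio $1-k^2/(n+k)^2$.

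Combining the two steps gives $\|E_n^k\|_k^2=\tfrac{(n-1)(n-1+2k)}{(n-1+k)^2}\|E_{n-1}^k\|_k^2$ for $n\geq 2$. The base case $\|E_1^k\|_k^2=\frac{1}{2\pi}\int_0^{2\pi}(2\sin x)^{2k}\,dx=\Gamma(2k+1)/\Gamma(k+1)^2$ is the classical Wallis/beta integral. Telescoping the resulting product and applying $\Gamma(z+1)=z\Gamma(z)$ yields $\|E_n^k\|_k^2=(n-1)!\,\Gamma(n+2k)/\Gamma(n+k)^2$, and multiplying by $n(n+2k)/(n+k)^2$ reproduces $\|E_{-n}^k\|_k^2=n!\,\Gamma(n+2k+1)/\Gamma(n+k+1)^2$. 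The only delicate point is the cross-term computation: one must apply \eqref{45} a second time to convert $\overline{E_n^k(ix)}$ into a genuine basis element before the orthogonality can be invoked; everything else is routine Gamma-function bookkeeping.
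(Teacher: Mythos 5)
Your proof is correct and follows essentially the same route as the paper: \eqref{12} gives $\|E_{n+1}^k\|_k=\|E_{-n}^k\|_k$, identity \eqref{45} combined with the orthogonality of $E_n^k$ and $E_{-n}^k$ yields the recursion $\|E_{-n}^k\|_k^2=\tfrac{n(n+2k)}{(n+k)^2}\|E_n^k\|_k^2$, and the Wallis-type base case plus telescoping finishes. The only difference is organizational: the paper rewrites \eqref{45} as $E_n^k(-x)=E_{-n}^k(x)-\tfrac{k}{n+k}E_n^k(x)$ and applies Pythagoras directly (using that $\delta_k$ is even), which sidesteps your cross-term evaluation of $\tfrac{1}{2\pi}\int_0^{2\pi}E_n^k(ix)^2\,\delta_k(x)\,dx$; your computation of that term is nevertheless correct.
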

\begin{proof}
In view of  (\ref{13}), we write
$$  E_n^k(-x)= E_{-n}^k(x)-\frac{k}{n+k}E_n^k(x), \quad n\geq 1\quad.$$
  Since $E_n^k$ and $E_{-n}^k$ are orthogonal, then
$$\|E_n^k\|_k^2= \|E_{-n}^k\|_k^2 +\frac{k^2}{(n+k)^2}\|E_n^k\|_k^2$$
 and
$$\|E_{n+1}^k\|_k^2= \|E_{-n}^k\|_k^2 =\frac{n(n+2k)}{(n+k)^2} \|E_n^k\|_k^2 . $$
In addition  it not hard to see that
 $$ \|E_1^k\|_k^2=\|E_{0}^k\|_k^2=   \frac{\Gamma(2k+1)}{\Gamma(k+1)^2}.$$
Therefore the desired formula follows.
\end{proof}
In what follows we set
  $$ \mathcal{E}_n^k=\frac{E_n^k}{\|E_n^k\|_k}.$$
From (\ref{56}) we get
$$
   \frac{\sqrt{n(n+2k)}}{n+k}\; \mathcal{E}_n^k(x)= e^{-x}\mathcal{E}_{n+1}^k-\frac{k}{n+k}\;\mathcal{E}_{-n}^k(x), \quad n\geq0,
$$
and so,
\begin{equation}\label{1}
  e^{-x} \mathcal{E}_{n+1}^k=\frac{\sqrt{n(n+2k)}}{n+k}\; \mathcal{E}_n^k(x)+ \frac{k}{n+k}\;  \mathcal{E}_{-n}^k(x), \quad \quad n\geq0.
\end{equation}
However  from  (\ref{12}) and  (\ref{45})  we  have  that
$$ \mathcal{E}_{-n}^k(x)= e^{-x}\mathcal{E}_{-n+1}(x) +\frac{k}{n+k}\;\mathcal{E}_n^k(x), \quad n\geq 1,$$
which implies that
$$ \frac{\sqrt{n(n+2k)}}{n+k}\; \mathcal{E}_{-n}^k(x)= e^{-x}\mathcal{E}_{-n+1}^k+\frac{k}{n+k}\;\mathcal{E}_n^k(x),\quad n\geq 1$$
and  then
\begin{equation}\label{2}
  e^{-x}\mathcal{E}_{n+1}^k=\frac{\sqrt{-n(-n+2k)}}{-n+k}\mathcal{E}_n^k(x)-\frac{k}{-n+k}\;\ \mathcal{E}_{-n}^k(x),\quad n\leq - 1.
\end{equation}
Now putting
\begin{equation}\label{alpha}
   \alpha_n=\frac{\sqrt{|n|(|n|+2k)}}{|n|+k} \qquad\text{and}\qquad \beta_n= \varepsilon(n)\frac{k}{|n|+k},\quad n\in \mathbb{Z},
\end{equation}
with
 $$\varepsilon(n)= \left\{
     \begin{array}{ll}
       1, & \hbox{$n\geq 0$}\\
      -1, & \hbox{$n < 0$}
     \end{array}
   \right.
 $$
We then   state  the following.
 \begin{prop} For all $n\in \mathbb{Z}$,  we have
\begin{equation}\label{eE}
   e^{-x}\mathcal{E}_{n+1}^k=\alpha_n \;\mathcal{E}_n^k +\beta_n \;\mathcal{E}_{-n}^k=  \alpha_n \;\mathcal{E}_n^k -\beta_{-n} \;\mathcal{E}_{-n}^k.
\end{equation}
   \end{prop}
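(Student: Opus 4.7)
The proof reduces to a case analysis on the sign of $n$, using the two recurrences (\ref{1}) and (\ref{2}) that were established just above the statement.

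My plan is to unify (\ref{1}) and (\ref{2}) by rewriting their coefficients in terms of $|n|$ and $\varepsilon(n)$. \textbf{First case ($n\geq 0$).} Equation (\ref{1}) reads
\[
e^{-x}\mathcal{E}_{n+1}^{k}=\frac{\sqrt{n(n+2k)}}{n+k}\,\mathcal{E}_n^{k}+\frac{k}{n+k}\,\mathcal{E}_{-n}^{k}.
\]
Since $|n|=n$ and $\varepsilon(n)=1$, the first fraction is exactly $\alpha_n$ in (\ref{alpha}) and the second is $\beta_n$, giving the desired formula. \textbf{Second case ($n\leq -1$).} Equation (\ref{2}) reads
\[
e^{-x}\mathcal{E}_{n+1}^{k}=\frac{\sqrt{-n(-n+2k)}}{-n+k}\,\mathcal{E}_n^{k}-\frac{k}{-n+k}\,\mathcal{E}_{-n}^{k}.
\]
Now $|n|=-n$ and $\varepsilon(n)=-1$, so the first fraction is again $\alpha_n$ while the second is $-\tfrac{k}{|n|+k}=\varepsilon(n)\tfrac{k}{|n|+k}=\beta_n$. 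This establishes the first equality for all $n\in\mathbb{Z}$.

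For the second equality, I would simply check the elementary symmetry $\beta_n=-\beta_{-n}$. Indeed, from (\ref{alpha}) we have $|-n|=|n|$, so
\[
\beta_{-n}=\varepsilon(-n)\,\frac{k}{|n|+k},
\]
and $\varepsilon(-n)=-\varepsilon(n)$ whenever $n\neq 0$, hence $\beta_n=-\beta_{-n}$; substituting into the first formula yields the second.

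I expect no real obstacle, as the statement is essentially a compact rewriting of (\ref{1})--(\ref{2}); the only delicate point is the boundary value $n=0$, where $\mathcal{E}_{-n}^k=\mathcal{E}_n^k$ and the sign convention $\varepsilon(0)=1$ must be handled consistently (the first form reduces to $e^{-x}\mathcal{E}_1^k=\mathcal{E}_0^k=1$, which matches the explicit expression $E_1^k(x)=e^x$ from (\ref{13})). Beyond this bookkeeping, the argument is purely algebraic.
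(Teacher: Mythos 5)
Your proposal is correct and matches the paper exactly: the paper gives no separate proof of this proposition, since it is precisely the unification of the recurrences (\ref{1}) and (\ref{2}) via the definitions of $\alpha_n$ and $\beta_n$ in (\ref{alpha}), which is what you carry out. You are also right to flag $n=0$ as the only delicate point (there $\beta_{-n}=\beta_n$ rather than $-\beta_n$, so the second written form of (\ref{eE}) only holds for $n\neq 0$ while the first form covers all $n$), a subtlety the paper passes over silently.
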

  Next we define on  $\mathbb{C}^{\mathbb{N}}$  the operators $ \Lambda_k$ and $\Lambda_k^*$ by:
\begin{eqnarray*}
  \Lambda_k(f)(n)&= & =\alpha_nf(n+1)+ \beta_{-n} f(-n+1)-f(n),
 \\ \Lambda_k^*(f)(n)&=  & \alpha_{n-1}f (n-1)+ \beta_{n-1} f(-n+1)-f(n).
\end{eqnarray*}
  and  the generalized  discrete  Laplace operator  by
$$\Delta_k= -\Lambda_k^*\Lambda_k .$$
 It follows that
$$\Delta_k(f)(n)=\alpha_n f(n+1)+\alpha_{n-1}f(n-1)-2f(n)-(\beta_n-\beta_{n-1})f(-n+1), \quad f\in\ell^2(\mathbb{Z}).$$
In the  case $k=0$, we have
$$\Lambda_0f(n)=f(n+1)-f(n),\quad  \Lambda_0^*f(n)=f(n-1)-f(n)$$
and
$$\Delta_k(f)(n)=f(n+1)-2f(n)+f(n-1). $$
 Let us   introduce  the operator
$$ \widetilde{T}^k(u)(x)=u'(x)+2ki\; \frac{u(x)-u(-x)}{1-e^{-2ix}}; \qquad u\in C^1(\mathbb{R}).$$
In view of (\ref{T}) and (\ref{TE}) we have
\begin{prop}\label{Tt}
  For all $n\in \mathbb{ Z}^+$ we have
$$ \widetilde{T}^k( \mathcal{E}_n^k(i.))(x)=i(\widetilde{n}+k)\mathcal{E}_n^k(ix).$$
\end{prop}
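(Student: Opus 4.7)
The plan is to reduce the claim to the known eigenvalue relation (\ref{TE}) by a direct substitution $x \mapsto ix$. Set $f = \mathcal{E}_n^k$ and $u(x) = f(ix)$; then by the chain rule $u'(x) = i f'(ix)$, while $u(x) - u(-x) = f(ix) - f(-ix)$. Feeding these into the definition of $\widetilde T^{k}$ gives
\begin{equation*}
\widetilde T^{k}(u)(x) = i f'(ix) + 2ki\,\frac{f(ix)-f(-ix)}{1-e^{-2ix}}.
\end{equation*}

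Next, I would compare this expression with $T^{k}(f)$ evaluated at the point $ix$. Directly from (\ref{T}),
\begin{equation*}
T^{k}(f)(ix) = f'(ix) + 2k\,\frac{f(ix)-f(-ix)}{1-e^{-2ix}} - k f(ix),
\end{equation*}
so multiplying by $i$ yields
\begin{equation*}
i\,T^{k}(f)(ix) = i f'(ix) + 2ki\,\frac{f(ix)-f(-ix)}{1-e^{-2ix}} - ik f(ix).
\end{equation*}
Subtracting these two identities isolates the extra $-k$ term built into $T^{k}$ but absent from $\widetilde T^{k}$, giving the clean conversion formula
\begin{equation*}
\widetilde T^{k}(u)(x) = i\bigl(T^{k}(f)(ix) + k f(ix)\bigr).
\end{equation*}

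Finally I would invoke (\ref{TE}), which gives $T^{k}(\mathcal{E}_n^k) = \widetilde n\,\mathcal{E}_n^k$, to conclude
\begin{equation*}
\widetilde T^{k}(\mathcal{E}_n^k(i\cdot))(x) = i(\widetilde n + k)\,\mathcal{E}_n^k(ix),
\end{equation*}
which is exactly the claim. Since the argument is a one-line chain-rule substitution followed by the known eigenfunction identity, there is no real obstacle; the only point to be careful with is that the denominator $1-e^{-2x}$ in the definition of $T^{k}$ becomes $1-e^{-2ix}$ after the substitution, matching precisely the denominator appearing in $\widetilde T^{k}$, so no spurious factor is generated. Note also that the argument does not actually use $n\in\mathbb Z^{+}$ and works for all $n\in\mathbb Z$, with $\widetilde n$ interpreted according to the sign convention given after (\ref{TE}).
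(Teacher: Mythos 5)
Your argument is correct and is exactly the verification the paper leaves implicit (the paper simply asserts the proposition "in view of (\ref{T}) and (\ref{TE})"): the chain-rule substitution $x\mapsto ix$ converts $T^k$ into $\widetilde T^k$ up to the term $ikf(ix)$, and the eigenvalue relation (\ref{TE}) does the rest. Your closing remark that the identity holds for all $n\in\mathbb{Z}$, and that the eigenvalue equation extends from $\mathbb{R}$ to $i\mathbb{R}$ because $E_n^k$ is a Laurent polynomial in $e^x$, is also accurate.
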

\subsection{ Generalized  discrete Fourier transform}
 For  $f\in \ell^2(\mathbb{Z})$ we define the generalized discrete fourier transform of $f$ by
$$\mathcal{F}_k(f)(x)=\sum_{n\in \mathbb{Z}} f(n)\mathcal{E}_n^k(ix), \quad n\in  \mathbb{Z}.$$
As $\{\mathcal{E}_n^k(ix),\; n\in \mathbb{Z}\}$ is an orthonormal basis of $L^2([0,2\pi),  \delta_k(x)dx)$,
then we can state,
\begin{thm}
  $\mathcal{F}_k$   is an isometric isomorphism  from   $\ell^2(\mathbb{Z})$ onto $L^2([0,2\pi),  \delta_k(x)dx).$
Precisely we have
$$\|\mathcal{F}_k(f)\|_k^2=\sum_{n\in \mathbb{Z}}|f(n)|^2$$
and for $n\in \mathbb{Z}$,
$$f(n)=\frac{1}{2\pi}\int_{0}^{2\pi}\mathcal{F}_k(f)(x)\overline{\mathcal{E}_n^k(ix)}\delta_k(x)dx.$$
\end{thm}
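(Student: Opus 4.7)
The plan is to deduce the theorem from the general Hilbert-space principle that the coefficient map associated with an orthonormal basis is an isometric isomorphism onto $\ell^2$. The key ingredient we need is that $\{\mathcal{E}_n^k(ix)\}_{n\in\mathbb{Z}}$ is an orthonormal basis of $H:=L^2([0,2\pi),\delta_k(x)dx)$, which is asserted (and partially justified by Proposition~2.4 and the defining relation (\ref{def})) earlier in the text.

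First I would make the orthonormality precise. Condition (b) of (\ref{def}) gives $(E_n^k(ix),e^{ijx})_k=0$ for every $j\triangleleft n$, and combined with the triangular expansion (a) this yields $(E_n^k(ix),E_m^k(ix))_k=0$ whenever $n\neq m$ (by induction on the ordering $\triangleleft$ and taking conjugates). Dividing by the explicitly computed norm $\|E_n^k\|_k$ from Proposition~2.4 shows that the family $\{\mathcal{E}_n^k(ix)\}_{n\in\mathbb{Z}}$ is orthonormal in $H$.

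Next I would verify completeness. Since $\triangleleft$ has finite initial segments on $\mathbb{Z}$, the triangular expansion $E_n^k(ix)=e^{inx}+\sum_{j\triangleleft n}c_{n,j}e^{ijx}$ can be inverted step by step to write each exponential $e^{inx}$ as a finite linear combination of the $\mathcal{E}_m^k(ix)$. Consequently the closed linear span of the $\mathcal{E}_n^k(ix)$ contains every trigonometric polynomial on $[0,2\pi)$, and since $\delta_k$ is bounded on $[0,2\pi)$, density of trigonometric polynomials in the classical $L^2([0,2\pi),dx)$ (Stone--Weierstrass on the circle) transfers to $H$ via the pointwise bound on $\delta_k$. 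Thus $\{\mathcal{E}_n^k(ix)\}_{n\in\mathbb{Z}}$ is a complete orthonormal system, i.e.\ an orthonormal basis of $H$.

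Granted this, the rest is routine Hilbert-space bookkeeping. For $f\in\ell^2(\mathbb{Z})$ the partial sums $S_Nf=\sum_{|n|\leq N}f(n)\mathcal{E}_n^k(ix)$ are Cauchy in $H$ by orthonormality, so $\mathcal{F}_k(f)$ is well-defined and Parseval's identity gives $\|\mathcal{F}_k(f)\|_k^2=\sum_{n\in\mathbb{Z}}|f(n)|^2$, proving the isometry. Taking the inner product of $\mathcal{F}_k(f)$ with $\mathcal{E}_n^k(ix)$ and interchanging sum and integral (justified by $L^2$ convergence) yields the inversion formula. Surjectivity follows by setting, for $g\in H$, $f(n):=(g,\mathcal{E}_n^k(ix))_k$; Bessel gives $f\in\ell^2(\mathbb{Z})$, and completeness gives $\mathcal{F}_k(f)=g$. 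The only genuinely non-formal step is the density argument in the completeness paragraph; everything else is a mechanical translation of the orthonormal-basis property.
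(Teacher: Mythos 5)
Your proposal is correct and follows exactly the route the paper intends: the paper offers no proof at all, simply invoking the (earlier asserted) fact that $\{\mathcal{E}_n^k(ix)\}_{n\in\mathbb{Z}}$ is an orthonormal basis of $L^2([0,2\pi),\delta_k(x)dx)$ and reading the theorem off as Parseval's identity plus the standard coefficient/inversion formulas. Your write-up supplies the details the paper omits (orthogonality from (a)--(b), completeness via the triangular inversion and density of trigonometric polynomials), with only the cosmetic caveat that the density transfer from $L^2(dx)$ to the weighted space needs one extra truncation step since $L^2(\delta_k\,dx)\not\subset L^2(dx)$ for $k>0$.
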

 It's not hard to see that
\begin{eqnarray*}
&&\mathcal{F}_k(\Lambda_k(f)) (x)=(e^{-ix}-1) \mathcal{F}_k(\Lambda_k(f)) (x), \quad
  \mathcal{F}_k(\Lambda_k^*(f)) (x) = (e^{ix}-1) \mathcal{F}_k(\Lambda_k(f)) (x)
 \end{eqnarray*}
 and
$$\mathcal{F}_k(\Delta_k(f))(x)= - 4\sin^2(x/2)\mathcal{F}_k(f)(x).$$
Clearly    $- \Lambda_k  $   is positive  self-adjoint operator on
 $\ell^2 (\mathbb{Z})$ and    generates a $C_0$-semigroup  $e^{t\Lambda_k}$, $t\geq 0$  given   by
$$\mathcal{F}_k(e^{t\Lambda_k}(f))(x)= e^{-4t\sin^2(x/2)}\mathcal{F}_k(f)(x)  $$
 It can be   written as
$$e^{t\Lambda_k}(f)(n)= \sum_{m\in \mathbb{Z} } H_t(n,m)f(m),\quad n\in \mathbb{Z},$$
with
$$H_t(n,m)=\frac{1}{2\pi}\int_{0}^{2\pi}e^{-4t\sin^2(x/2)}\mathcal{E}_m^k(ix)\mathcal{E}_n^k(-ix) \delta_k(x)dx,$$
which  will be  called  the generalized discrete heat kernel.
 \par We now introduce the  generalized discrete Riesz Transforms given as  a multiplier operators by
 \begin{equation}\label{riesz}
    \mathcal{F}_k(R(f))(x)=-ie^{-ix/2}\mathcal{F}_k(f)(x),\quad  \mathcal{F}_k(R^*(f))(x)=ie^{ix/2}\mathcal{F}_k(f)(x), \quad x\in [0,2\pi].
 \end{equation}
They can be  written as
$$R=\Lambda_k(-\Delta_k)^{-1/2}, \quad \text{and} \quad R^*=\Lambda_k^*(-\Delta_k)^{-1/2}.$$
 It follows easily that   $R$ and $R^*$ are a bounded operators on $\ell^2(\mathbb{Z})$ and they are a kernel operators, since we have
\begin{eqnarray*}
R(f)(n)&=&-i\int_{0}^{2\pi}e^{-ix/2}\mathcal{F}_k(f)(x)\mathcal{E}_n^k(-ix)\delta_k(x)dx
\\&=&\frac{1}{2\pi i}\sum_{n\in \mathbb{Z}}f(m)\int_{0}^{2\pi}e^{-ix/2}\mathcal{E}_m^k(ix)\mathcal{E}_n^k(-ix)\delta_k(x)dx.
\\&=&\sum_{m\in \mathbb{Z}}\mathcal{R}(n,m)f(m),
\end{eqnarray*}
where
\begin{eqnarray*}
  \mathcal{R}(n,m)&=&\frac{1}{2\pi i} \int_{0}^{2\pi}e^{-ix/2}\mathcal{E}_m^k(ix)\mathcal{E}_n^k(-ix)\delta_k(x)dx.
\\&=&  \frac{1}{2\pi i} \int_{-\pi}^{\pi} h(x) \mathcal{E}_m^k(ix)\mathcal{E}_n^k(-ix)\delta_k(x)dx.
 \end{eqnarray*}
with $h(x)=sign(x)e^{-ix/2}$. Similarly
\begin{eqnarray*}
R^*(f)(n)=\sum_{m\in \mathbb{Z}}\mathcal{R}^*(n,m)f(m)
\end{eqnarray*}
where
\begin{eqnarray*}
  \mathcal{R}^*(n,m)=-\frac{1}{2\pi i} \int_{0}^{2\pi}e^{ix/2}\mathcal{E}_m^k(ix)\mathcal{E}_n^k(-ix)\delta_k(x)dx.
  \end{eqnarray*}

\section{$\ell^p$-boundedness of the generalized  Riesz transform}
The main tool  to  study  the $\ell^p$-boundedness of a discrete    integral  operator  is a  discrete version  of  an $\ell^p$- theorem   adopted in  Dunkl theory  by the  first author \cite{BA1,BA}. The ideas for this material come from   \cite{B,AA}.
\subsection{ $\ell^p$-Theorem  for a  discrete integral operators }
\par Let us begin by remember  the following continuous version
\begin{thm}
   Let $\mathcal{K} $ be a measurable function on $\mathbb{R}^2\setminus\{(x,y);\;|x|\neq |y|\} $
and $S$ be a bounded operator on $L^2(\mathbb{R},dx)$  such that
   $$T(f)(x)=\int_{\mathbb{R}}K(x,y)f(y)dy, $$
for all a.e. $x\in \mathbb{R}$, such that $x,-x\notin supp(f)$. If $K$ satisfies the following H\"{o}rmander type condition
\begin{eqnarray*}
\sup_{y,y'\in \mathbb{R}} \int_{||x|-|y||>2|y-y'|}\Big(|K(x,y)-\mathcal{K}(x,y')|+|K(y,x)-K(y',x)|\Big)dx<\infty
\end{eqnarray*}
then $T$ can be extended to bounded operator from $L^p(\mathbb{R},dx)$ onto itself for $1<p<\infty$.
\end{thm}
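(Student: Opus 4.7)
The plan is to follow the classical Calder\'{o}n--Zygmund program, but carried out on $\mathbb{R}$ viewed as a homogeneous space with respect to the symmetrized quasi-metric $d(x,y)=||x|-|y||$, with ordinary intervals replaced by their symmetrizations $I^{*}=I\cup(-I)$. Since $T$ is already bounded on $L^{2}$, by Marcinkiewicz interpolation and duality it is enough to establish the weak-type $(1,1)$ estimate
$$ \bigl|\{x\in\mathbb{R}:\ |Tf(x)|>\alpha\}\bigr|\leq \frac{C}{\alpha}\,\|f\|_{1},\qquad \alpha>0,\ f\in L^{1}\cap L^{2}. $$

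To do this, I would first apply the standard Calder\'{o}n--Zygmund stopping-time argument to $f$ at level $\alpha$, producing pairwise disjoint intervals $\{I_{j}\}$ with $\sum_{j}|I_{j}|\leq C\alpha^{-1}\|f\|_{1}$ and a decomposition $f=g+\sum_{j}b_{j}$ where $\|g\|_{\infty}\leq C\alpha$, each $b_{j}$ is supported in $I_{j}$, has mean zero, and satisfies $\|b_{j}\|_{1}\leq C\alpha|I_{j}|$. The ``good'' part is controlled via the $L^{2}$-hypothesis in the usual way, namely $\|Tg\|_{2}^{2}\leq C\|g\|_{2}^{2}\leq C\alpha\|f\|_{1}$. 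For the ``bad'' part, the exceptional set has to be enlarged to a symmetric dilate
$$ E^{*}=\bigcup_{j}(2I_{j})^{*}, $$
which still has measure at most $C\alpha^{-1}\|f\|_{1}$, and for every $x\notin E^{*}$ and every $y\in I_{j}$ neither $x$ nor $-x$ lies in the support of $b_{j}$, so the kernel representation of $Tb_{j}(x)$ is legitimate. Using the mean-zero property of $b_{j}$ against a base point $y_{j}\in I_{j}$, one writes
$$ Tb_{j}(x)=\int\bigl(K(x,y)-K(x,y_{j})\bigr)\,b_{j}(y)\,dy, $$
and Fubini together with the first term of the H\"{o}rmander-type hypothesis yields
$$ \int_{\mathbb{R}\setminus E^{*}}|Tb_{j}(x)|\,dx\leq C\,\|b_{j}\|_{1}. $$
Summing in $j$ and applying Chebyshev gives the weak-type estimate on $\mathbb{R}\setminus E^{*}$, which combined with the trivial bound $|E^{*}|\leq C\alpha^{-1}\|f\|_{1}$ completes the weak-type $(1,1)$ argument.

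With weak-$(1,1)$ and strong-$(2,2)$ in hand, Marcinkiewicz interpolation gives $L^{p}$-boundedness for $1<p\leq 2$. For the range $p>2$, I would pass to the formal adjoint $T^{*}$, whose kernel is $\overline{K(y,x)}$; the second term $|K(y,x)-K(y',x)|$ in the H\"{o}rmander hypothesis is exactly what is needed to repeat the above argument with $T$ replaced by $T^{*}$, yielding $L^{q}$-boundedness of $T^{*}$ for $1<q\leq 2$ and therefore $L^{p}$-boundedness of $T$ for $2\leq p<\infty$ by duality.

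The main obstacle is the bookkeeping forced by the non-standard metric: classical Calder\'{o}n--Zygmund theory is tailored to $|x-y|$, whereas here the relevant ``distance'' vanishes on the whole set $\{|x|=|y|\}$, which contains also the anti-diagonal $\{y=-x\}$. This forces one to work with symmetrized intervals $I^{*}=I\cup(-I)$, to enlarge the exceptional set so that it is invariant under $x\mapsto -x$, and to check that the stopping-time Calder\'{o}n--Zygmund intervals continue to satisfy the usual doubling-type properties once symmetrization is imposed. Modulo these adjustments, the argument reduces to the standard template on the quotient space $\mathbb{R}/\{\pm\mathrm{id}\}\cong[0,\infty)$ endowed with Lebesgue measure, which is a space of homogeneous type in the sense of Coifman and Weiss.
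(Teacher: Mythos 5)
Your proposal is correct and follows essentially the same route the paper relies on: the paper gives no written proof of this theorem, stating only that it follows from ``minor modifications of the known classical ones'' (referring to the Dunkl-setting version in the cited work of Amri and Sifi), and those modifications are exactly the ones you carry out — the classical Calder\'on--Zygmund decomposition with the exceptional set symmetrized under $x\mapsto -x$ so that the kernel representation applies off it, the first half of the H\"ormander condition for the weak $(1,1)$ bound, and the second half for the adjoint and duality. The only caveat is cosmetic: the dilate $(2I_j)^*$ should be taken a bit larger so that $x\notin E^*$ genuinely forces $\lvert\lvert x\rvert-\lvert y\rvert\rvert>2\lvert y-y_j\rvert$ for all $y\in I_j$, which is the kind of bookkeeping you already flag.
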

The proof of this theorem only requires some minor modifications of the known classical ones and can be easily  adopted to discrete analogues version
by considering the metric space $\mathbb{Z}$ with the  counting measure   which is   invariant by $-Id$.
We state the following,
\begin{thm}\label{ttmm}
   Let $K $ be a measurable function on $ \mathbb{Z}\times \mathbb{Z}\setminus\{(n,m);\;|m|\neq |n|\} $
and $T$ be a bounded operator on $\ell^2(\mathbb{Z})$  such that for a compact support fonction $f\in \ell^(\mathbb{Z})$,
   $$T(f)(n)= \sum_{m\in \mathbb{Z}} K(n,m)f(m), $$
for all   $n\in \mathbb{Z}$, such that $f(n)=f(-n)=0$. If $\mathcal{K}$ satisfies the following H\"{o}rmander type condition
\begin{eqnarray}\label{Hor}
\sup_{m,\ell\in \mathbb{Z}} \sum_{||n|-|m||>2|m-\ell|}\Big(|K(n,m)-K(n,\ell)|+|K(m,n)-K(\ell,n)|\Big) <\infty.
\end{eqnarray}
then $T$ can be extended to a bounded operator from $\ell^p(\mathbb{Z})$ onto itself, for all$1<p<\infty$.
\end{thm}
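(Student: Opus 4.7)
The plan is to follow the standard Calder\'on--Zygmund scheme adapted to the symmetric setting. Since $T$ is already assumed bounded on $\ell^2(\mathbb{Z})$, it suffices to establish a weak-type $(1,1)$ bound; Marcinkiewicz interpolation then delivers $\ell^p$-boundedness for $1<p\le 2$, and duality extends this to $2\le p<\infty$ because the H\"ormander condition (\ref{Hor}) is symmetric in the two entries of $K$ and hence applies verbatim to the transposed kernel $K(m,n)$, i.e.\ to $T^*$. The new feature compared with the classical situation is that the relevant pseudo-metric is $d^\sharp(n,m):=\bigl||n|-|m|\bigr|$, reflecting that the kernel may be singular both on $n=m$ and on $n=-m$; this is why the hypothesis only requires the kernel representation for $n$ with $f(n)=f(-n)=0$.

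\textbf{Calder\'on--Zygmund decomposition.} Given $f\in\ell^1(\mathbb{Z})$ with compact support and a level $\lambda>0$, I would use the usual one-dimensional dyadic Calder\'on--Zygmund decomposition (in the Euclidean metric on $\mathbb{Z}$) to write $f=g+\sum_i b_i$ where each $b_i$ is supported on an interval $I_i=[m_i-r_i,m_i+r_i]$, has zero sum, satisfies $\|b_i\|_1\le C\lambda|I_i|$, while $\sum_i|I_i|\le C\|f\|_1/\lambda$, $\|g\|_\infty\le C\lambda$ and $\|g\|_1\le\|f\|_1$. The $\ell^2$-boundedness of $T$ handles the good part in the standard way:
\begin{equation*}
\bigl|\{n:|Tg(n)|>\lambda/2\}\bigr|\;\le\;\frac{4}{\lambda^2}\|Tg\|_2^2\;\le\;\frac{C}{\lambda^2}\|g\|_\infty\|g\|_1\;\le\;C\frac{\|f\|_1}{\lambda}.
\end{equation*}

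\textbf{Bad part through the symmetrised exceptional set.} For the bad part the crucial move is to enlarge each $I_i$ not to its double in the Euclidean sense but to the symmetric set
\begin{equation*}
I_i^\sharp\;:=\;\bigl\{n\in\mathbb{Z}:\;d^\sharp(n,m_i)\le 4r_i\bigr\}.
\end{equation*}
Since $I_i^\sharp$ is contained in the union of two ordinary intervals of length $\sim r_i$, one has $|I_i^\sharp|\le Cr_i$, so the exceptional set $E:=\bigcup_i I_i^\sharp$ still satisfies $|E|\le C\|f\|_1/\lambda$. Moreover for $n\notin E$ both $n$ and $-n$ lie outside $I_i$, which legitimises the kernel representation, and using $\sum_m b_i(m)=0$ one writes
\begin{equation*}
Tb_i(n)\;=\;\sum_{m\in I_i}\bigl[K(n,m)-K(n,m_i)\bigr]\,b_i(m).
\end{equation*}
For any $m\in I_i$ the triangle inequality in the pseudo-metric gives $d^\sharp(n,m)\ge d^\sharp(n,m_i)-|m-m_i|\ge 4r_i-r_i\ge 2|m-m_i|$, so (\ref{Hor}) applied with $\ell=m_i$ produces
\begin{equation*}
\sum_{n\notin E}|Tb_i(n)|\;\le\;\sum_{m\in I_i}|b_i(m)|\sum_{d^\sharp(n,m)>2|m-m_i|}|K(n,m)-K(n,m_i)|\;\le\;C\|b_i\|_1.
\end{equation*}
Summing in $i$ and applying Chebyshev yields $|\{n\notin E:|Tb(n)|>\lambda/2\}|\le C\|f\|_1/\lambda$; together with $|E|\le C\|f\|_1/\lambda$ this finishes the weak-type $(1,1)$ bound.

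\textbf{Main obstacle.} The delicate point is the coexistence of two geometries: the Calder\'on--Zygmund decomposition must live in the Euclidean metric (so that the intervals $I_i$ behave nicely and the mean-zero cancellation can be used), while the H\"ormander-type condition is formulated in the symmetrised pseudo-metric $d^\sharp$. Reconciling them requires choosing the enlargement factor large enough that, for $n$ outside $I_i^\sharp$ and $m\in I_i$, the comparison $d^\sharp(n,m)>2|m-m_i|$ indeed holds; this is precisely where the factor $4$ above is used. Once this bookkeeping is in place the remaining Marcinkiewicz--duality argument is routine.
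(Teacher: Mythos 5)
Your proof is correct and is precisely the argument the paper has in mind: the paper does not actually write out a proof of Theorem \ref{ttmm}, but states that it follows from the classical Calder\'on--Zygmund scheme ``by considering the metric space $\mathbb{Z}$ with the counting measure which is invariant by $-Id$,'' which is exactly your weak-$(1,1)$ argument with the symmetrised pseudo-metric $\bigl||n|-|m|\bigr|$ and doubled-in-$d^\sharp$ exceptional sets. In effect you have supplied the details the authors omit, and the bookkeeping (enlargement factor, applicability of the kernel representation off $\pm\,\mathrm{supp}\,b_i$, duality via the symmetric form of (\ref{Hor})) all checks out.
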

Our strategy in applying this theorem inspired by  \cite{B}, through the following  result,
\begin{thm}\label{3.3}
  Suppose that $T$ is a linear and bounded operator on $\ell^2(\mathbb{Z})$ and such that there exists a function
$K:\mathbb{Z}\times \mathbb{Z}\setminus \{(m,n),\; |m|\neq |n|\}$ such that for every $f=(f(n))_{n\in \mathbb{Z}}$
\begin{equation}\label{Tf}
 T(f)(n)=\sum_{m\in \mathbb{Z}}K(n,m)f(m)
\end{equation}
for $n\in \mathbb{Z}$ such that $f(n)=f(-n)=0$. In addition we assume that $K$ satisfies the following H\"{o}rmander conditions
\begin{eqnarray*}
&& (i) \quad|K(n,m)|\leq \frac{C}{| n-m|},
\\&& (ii) \quad|K(n,m)-K(n,\ell)|+\quad |K(m,n)-K(\ell,n)|\leq C\; \frac{ |n-\ell|}{(|m|-|n|)^2},
  \end{eqnarray*}
for all $m,n,\ell$ such that $$ ||m|-|n||>2|n-\ell| \quad\text{and}\quad  \frac{|n|}{2}\leq |m|,|\ell|\leq \frac{3|n|}{2}.$$
Then   $T$ can be extended to bounded operator from $\ell^p(\mathbb{Z})$ onto $\ell^p(\mathbb{Z})$,  for  all $1<p<\infty$.
\end{thm}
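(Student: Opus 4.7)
My plan is to verify the discrete Hörmander-type condition (\ref{Hor}) required by Theorem~\ref{ttmm} and then invoke that theorem directly. Fix $m,\ell\in\mathbb{Z}$, set $d=|m-\ell|$, and consider
$$S(m,\ell)=\sum_{||n|-|m||>2d}\bigl(|K(n,m)-K(n,\ell)|+|K(m,n)-K(\ell,n)|\bigr),$$
which must be bounded uniformly in $(m,\ell)$. Since hypothesis (ii) controls the ``row'' and ``column'' differences symmetrically, I only describe the estimate for the first summand. The natural decomposition of the $n$-range is into the comparable set $\mathcal{N}=\{n:|n|/2\leq|m|,|\ell|\leq 3|n|/2\}$, where the smoothness estimate (ii) is directly available, and its complement $\mathcal{N}^{c}$, where only the size estimate (i) can be used.

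On $\mathcal{N}\cap\{||n|-|m||>2d\}$, hypothesis (ii) gives $|K(n,m)-K(n,\ell)|\leq Cd/(|m|-|n|)^{2}$. The crucial combinatorial input is that for every positive integer $k$ there are at most four $n\in\mathbb{Z}$ with $||n|-|m||=k$, namely $\pm(|m|+k)$ and (when non-negative) $\pm(|m|-k)$. Combined with the elementary tail bound $\sum_{k\geq 2d+1}k^{-2}\leq 1/(2d)$, this gives
$$\sum_{n\in\mathcal{N},\,||n|-|m||>2d}|K(n,m)-K(n,\ell)|\leq 4C\sum_{k>2d}\frac{d}{k^{2}}\leq 2C,$$
so the comparable piece contributes $O(1)$ uniformly in $(m,\ell)$.

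On the complement $\mathcal{N}^{c}$ I would split once more. The ``inner'' sub-case $|n|<|m|/2$ contains at most $|m|$ indices, each with $|n-m|\geq|m|/2$, so (i) together with the triangle inequality gives a contribution of $O(1)$ immediately. The ``outer'' sub-case $|n|>3|m|/2$ is where I expect the main obstacle: the crude bound $|K(n,m)-K(n,\ell)|\leq 2C/|n|$ coming from (i) is logarithmically divergent when summed, so one genuinely needs to exploit cancellation between $K(n,m)$ and $K(n,\ell)$. My plan is to write the difference as a telescoping sum of $d$ one-step increments along the integers between $\ell$ and $m$ and apply (ii) to each increment, combined with a dyadic decomposition of $|n|\sim 2^{j}|m|$; the delicate point is that (ii) is stated only in the comparable regime, so either a strengthening of (ii) to the far range or a scale-by-scale argument anchored by (i) is required in order to close the estimate. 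Once the outer contribution is shown to be $O(1)$ as well, we obtain $\sup_{m,\ell}S(m,\ell)<\infty$, which is precisely the condition (\ref{Hor}), and Theorem~\ref{ttmm} then delivers the claimed $\ell^{p}$-boundedness of $T$ for all $1<p<\infty$.
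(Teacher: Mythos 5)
Your plan founders exactly where you suspect it would, and the obstacle is not a technicality that a finer decomposition will remove: under the stated hypotheses the H\"ormander condition (\ref{Hor}) for the \emph{full} kernel $K$ is simply not available. In the outer region $|n|>3|m|/2$ the smoothness hypothesis (ii) is not assumed at all (it is only postulated when $|n|/2\le|m|,|\ell|\le 3|n|/2$), so the only information there is the size bound (i), and $\sum_{|n|>3|m|/2}1/|n-m|$ is a divergent harmonic sum. There is no cancellation to exploit because none is hypothesized: a kernel equal to $\varepsilon_{n,m}/|n-m|$ with arbitrary signs in the far region satisfies (i) and (vacuously) (ii) yet violates (\ref{Hor}). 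Your two proposed escapes --- strengthening (ii) to the far range, or a dyadic telescoping argument --- respectively change the theorem's hypotheses and require exactly the unavailable far-range smoothness, so the proof does not close.

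The paper's proof sidesteps this by never trying to verify (\ref{Hor}) for $K$ itself. It splits $T=T_{loc}+T_{glob}$ with $T_{glob}(f)(n)=T(\chi_{\mathbb{Z}\setminus W_n}f)(n)$, $W_n=\{m:|n|/2\le|m|\le 3|n|/2\}$. The global part is bounded on $\ell^p$ \emph{directly}, with no Calder\'on--Zygmund theory: by (i), $|T_{glob}(f)(n)|$ is dominated by $\frac{1}{|n|}\sum_{|m|\le|n|}|\widetilde f(m)|+\sum_{|m|\ge|n|}|\widetilde f(m)|/|m|$, i.e.\ by the two discrete Hardy operators, which are $\ell^p$-bounded for $1<p<\infty$. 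Only the truncated kernel $\widetilde K(n,m)=\chi_{W_n}(m)K(n,m)$ is fed into Theorem~\ref{ttmm}, and for it the sum in (\ref{Hor}) runs over a genuinely local set of $n$; the price is a handful of boundary cases where exactly one of $m,\ell$ survives the cut-off $\chi_{W_n}$, each controlled by (i) and a counting argument. Your estimate on the comparable set $\mathcal{N}$ and on the inner region $|n|<|m|/2$ is essentially the paper's middle- and near-range computation and is fine; what is missing is the structural idea of removing the global part by Hardy's inequality rather than by kernel smoothness.
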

\begin{proof}
We   apply the  same arguments that used  in   \cite{B}.
  \par For  $n\in \mathbb{Z}$, we let  $$W_n= \{m\in \mathbb{Z}; \; |n|/2\leq  |m|\leq 3|n|/2\}.$$  Define the operators
$$T_{glob}(f)(n)=T(\chi_{\mathbb{Z}\setminus W_n}f)(n), \quad n\in \mathbb{Z}$$
and
$$T_{loc}(f)=T(f)-T_{glop}(f),$$
for   compactly  supported $f = (f(n))_{n\in \mathbb{Z}}$.
We  first check the boundedness of  $T_{glob}$. According to $(\ref{Tf})$, one can    write
\begin{eqnarray*}
 T_{glob}(f)(n)&= &\sum _{m\in \mathbb{Z}\setminus W_n}K(n,m)f(m)=\sum _{|m|\leq |n|/2}K(n,m)f(m)+\sum _{|m| \geq 3|n|/2}K(n,m)f(m).
\end{eqnarray*}
By using $(i)$ we have that
\begin{eqnarray*}
 | T_{glob}(f)(n)|&\leq&  C\left\{\frac{ 1}{|n|} \sum _{|m|\leq |n| } |f(m)|+\sum _{|m| \geq |n| } \frac{|f(m)|}{|m|}\right\}
\\&\leq &  C\Big\{ H_0(\widetilde{f})(|n|) +H_1(\widetilde{f})(|n|)\Big\}
\end{eqnarray*}
where $\widetilde{f}(n)=|f(n)|+|f(-n)|$,  $H_0$ and $H_1$ are the discrete  Hardy operators   given on $\mathbb{C}^{\mathbb{Z}^+}$ by
$$H_0(a)(n)=\frac{1}{n}\sum_{m=0}^n a(n), \quad H_1(a)(n)=\sum_{m\geq n}  \frac{a(n)}{n} .$$
Since it's known that these operators are bounded on $\ell^p(\mathbb{N})$ for $1<p<\infty$,    then $ T_{glob}$  can be extended to bounded operator on $\ell^p(\mathbb{\mathbb{Z}})$.
 \par Next we focus on $T_{loc}$. Define the kernel $\widetilde{K}$ by
$$\widetilde{K}(n,m)=\chi_{W_n}(m)K(n,m); \qquad n,m\in \mathbb{Z}; \; |n|\neq |m|.$$
We can write
$$ T_{loc}(f)(n) =\sum _{m\in \mathbb{Z} }\widetilde{K}(n,m)f(m)$$
for compactly supported function $f$ with $f(n)=f(-n)=0$.
We will prove that $\widetilde{K}$ satisfies the H\"{o}rmander type condition (\ref{Hor}).
\par Let $m,\ell \in \mathbb{Z}$ with $|m|<|\ell|$. We have
\begin{eqnarray*}
&& \sum_{||n|-|m||\geq 2|m-\ell|}|\widetilde{K}(n,m)-\widetilde{K}(n,\ell)|= \qquad\qquad\qquad\qquad\qquad\qquad\qquad\qquad\qquad
\\&& \qquad\qquad\underset{ |n| < 2|\ell| /3}{\sum_{||n|-|m||\geq 2|m-\ell|}}|\widetilde{K}(n,m)-\widetilde{K}(n,\ell)|
+ \underset{|n|\geq 2|\ell|/3 }{\sum_{||n|-|m||\geq 2|m-\ell|}}|\widetilde{K}(n,m)-\widetilde{K}(n,\ell)|.
\end{eqnarray*}
Making  use of the assertion $(i)$
\begin{eqnarray*}
   \underset{|n|< 2|\ell| /3}{\sum_{||n|-|m||\geq 2|m-\ell|}}|\widetilde{K}(n,m)-\widetilde{K}(n,\ell)|& = &\underset{   |n| < 2|\ell|/3}{\sum_{||n|-|m||\geq 2|m-\ell|}|}\widetilde{K}(n,m)|
\\&\leq&  C\;\underset{ 2|m|/3\leq |n|\leq 2|\ell|/3}{\sum_{||n|-|m||\geq 2|m-\ell|}}\frac{1}{||n|-|m||}
\\&\leq&  \frac{C}{|\ell|-|m|}\sum_{2|m|/3\leq |n|\leq 2|\ell|/3}\leq C
\end{eqnarray*}
and  by   $(ii)$
\begin{eqnarray*}
 \underset{|n|\geq 2|\ell|/3 }{  \sum_{||n|-|m||\geq 2|m-\ell|}}|\widetilde{K}(n,m)-\widetilde{K}(n,\ell)|&\leq& C \; |m-\ell|\sum_{||n|-|m||\geq 2|m-\ell|}
\frac{1}{(|n|-|m|)^2}
\\ &\leq& C |m-\ell| \sum_{j\geq 2|m-\ell|}\frac{1}{j^2}\leq C.
\end{eqnarray*}
Hence,  we conclude that
\begin{eqnarray*}
   \sum_{||n|-|m||\geq 2|m-\ell|}|\widetilde{K}(n,m)-\widetilde{K}(n,\ell)| \leq C
\end{eqnarray*}
Let us now prove that
\begin{eqnarray}
  \sum_{||n|-|m||\geq 2|m-\ell|}|\widetilde{K}(m,n)-\widetilde{K}(\ell,n)| \leq C
\end{eqnarray}
  Noting  first that if $||n|-|m||\geq 2|m-\ell|$  then
\begin{equation}\label{nm}
 ||n|-|\ell||\geq  \frac{||n|-|m||}{2}.
\end{equation}
Begin with  the case $3|m|<|\ell|$. Then  $W_m\cap W_\ell=\emptyset$ and
by using  (i) and (\ref{nm}) we have that
\\ \\ $\displaystyle{ \sum_{||n|-|m||\geq 2|m-\ell|}|\widetilde{K}(m,n)-\widetilde{K}(\ell,n)| }$
\begin{eqnarray*}
&& =  \underset{n\in W_m }{\sum_{||n|-|m||\geq 2|m-\ell|}}|\widetilde{K}(m,n)-\widetilde{K}(\ell,n)|
 + \underset{n\in W_\ell }{\sum_{||n|-|m||\geq 2|m-\ell|}}|\widetilde{K}(m,n)-\widetilde{K}(\ell,n)|
\\ && \leq C \; \Big(\underset{n\in W_m}{\sum_{||n|-|m||\geq 2|m-\ell|}}\frac{1}{||n|-|m||}+
\underset{n\in  W_\ell}{\sum_{||n|-|m||\geq 2|m-\ell|}}\frac{1}{||n|-|m||}\Big)
\\&&\leq  \frac{C}{|\ell|} \Big( \sum_{n\in W_m}+\sum_{n\in W_\ell} \Big) \leq C\;\frac{(|m|+|\ell|)}{|\ell|}\leq C.
\end{eqnarray*}
 Therefore, when $3|m|<|\ell|$,
\begin{eqnarray}
  \sum_{||n|-|m||\geq 2|m-\ell|}|\widetilde{K}(m,n)-\widetilde{K}(\ell,n)| \leq C.
\end{eqnarray}
Next,  we assume that $|\ell|\leq 3|m|$. If further $|\ell| < 9|m|/4$, then  we have $2|\ell|/3< 3|m|/2$ and we write
\begin{eqnarray*}
 &&\sum_{||n|-|m||\geq 2|m-\ell|}|\widetilde{K}(m,n)-\widetilde{K}(\ell,n)|\qquad\qquad\qquad\qquad\qquad
\qquad\qquad\qquad\qquad\qquad\qquad\qquad\qquad\qquad\\&&\qquad\qquad\qquad\qquad\quad
\\&&= \underset{|m|/2\leq |n|< 2|\ell|/3}{\sum_{||n|-|m||\geq 2|m-\ell|}}|\widetilde{K}(m,n)-\widetilde{K}(\ell,n)|+
\underset{2|\ell|/3\leq |n|\leq  3|m|/2}{\sum_{||n|-|m||\geq 2|m-\ell|}}|\widetilde{K}(m,n)-\widetilde{K}(\ell,n)|
\\ &&+\underset{3|m|/2 < |n|\leq  3|\ell|/2}{\sum_{||n|-|m||\geq 2|m-\ell|}}|\widetilde{K}(m,n)-\widetilde{K}(\ell,n)|.
\end{eqnarray*}
Since
 $$\{n\in \mathbb{Z};\;2|\ell|/3 \leq |n|\leq 3|m|/2\}\subset \{n \in \mathbb{Z};\; |n|/2   \leq |\ell|,|m|< 3|n|/2\}$$
so, using $(ii)$   we get
 \begin{eqnarray*}
 && \underset{2|\ell|/3\leq |n|\leq 3|m|/2}{\sum_{||n|-|m||\geq 2|m-\ell|}}|\widetilde{K}(m,n)-\widetilde{K}(\ell,n)| \leq C.
  \sum_{||n|-|m||\geq 2|m-\ell|} \frac{|m-\ell|}{(|n|-|m|)^2}\leq C
 \end{eqnarray*}
 According to $(i)$ and (\ref{nm})  we obtain that
\begin{eqnarray}
 \underset{3|m|/2 < |n| \leq 3|\ell|/2}{\sum_{||n|-|m||\geq 2|m-\ell|}}|\widetilde{K}(m,n)-\widetilde{K}(\ell,n)|
&\leq&  C \underset{3|m|/2\leq |n|\leq 3|\ell|/2}{\sum_{||n|-|m||\geq 2|m-\ell|}}\frac{1}{||n|-|m||}\nonumber
\\&\leq& C\; \sum_{3|m|/2\leq |n|\leq 3|\ell|/2}\frac{1}{|m-\ell|}
\leq C.\label{02}
\end{eqnarray}
 and
\\ \\ $\displaystyle{\underset{|m|/2\leq |n|< 2|\ell|/3}{\sum_{||n|-|m||\geq 2|m-\ell|}}|\widetilde{K}(m,n)-\widetilde{K}(\ell,n)|}$
\begin{eqnarray}
 && \nonumber\qquad\qquad\qquad\leq C\; \underset{|m|/2\leq |n|\leq 2|\ell|/3}{\sum_{||n|-|m||\geq 2|m-\ell|}}\frac{1}{||n|-|m||}
\\&&\nonumber\qquad\qquad\qquad\leq C\;\underset{|m|/2\leq |n|\leq 2|m|/3}{\sum_{||n|-|m||\geq 2|m-\ell|}}\frac{1}{||n|-|m||}+ C\;\underset{2|m|/3\leq |n|\leq 2|\ell|/3}{\sum_{||n|-|m||\geq 2|m-\ell|}}\frac{1}{||n|-|m||}
\\&&\qquad\qquad\qquad\leq  \frac{ C}{|m|}\sum_{|m|/2\leq |n|\leq 2|m|/3}+ C\;
\sum_{2|m|/3\leq |n|\leq 2|\ell|/3}\frac{1}{ |m-\ell|}\leq C.\label{01}
\end{eqnarray}
Now suppose that $\ell  > 9|m|/4$. In this case we have $3|m|/2<2|\ell|/3$ and
\begin{eqnarray*}
 &&\sum_{||n|-|m||\geq 2|m-\ell|}|\widetilde{K}(m,n)-\widetilde{K}(\ell,n)|\qquad\qquad\qquad\qquad\qquad
\qquad\qquad\qquad\qquad\qquad\qquad\qquad\qquad\qquad\\&&\qquad\qquad\qquad\qquad\quad
\\&&\leq  \underset{|m|/2\leq |n|< 2|\ell|/3}{\sum_{||n|-|m||\geq 2|m-\ell|}}|\widetilde{K}(m,n)-\widetilde{K}(\ell,n)|+
\underset{3|m|/2\leq |n|\leq  3|\ell|/2}{\sum_{||n|-|m||\geq 2|m-\ell|}}|\widetilde{K}(m,n)-\widetilde{K}(\ell,n)|\leq C,
 \end{eqnarray*}
which is an immediate  consequence of  (\ref{01}) and  (\ref{02}). This achieves the proof of Theorem \ref{3.3}.
 \end{proof}
\subsection{ Boundedness of the Riesz Transforms}
 We come now to  the $\ell^p$-boundedness of the Riesz transform, we shall prove that the kernel  $\mathcal{R}(n,m)$ satisfies the condition of
 Theorem \ref{3.3}.
\begin{lem}\label{l1}  For all $m,n\in \mathbb{Z}$, we have
$$(\widetilde{m}-\widetilde{n}-1/2)\mathcal{R}(n,m)=\frac{k}{\pi}\int_{-\pi}^{\pi}e^{-ix}\mathcal{E}_m(ix)\mathcal{E}_n^k(ix) \cos (x/2)\delta_{k-1/2}(x)dx.$$
  \end{lem}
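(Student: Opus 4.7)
The plan is to exploit Proposition~\ref{Tt}, which says that $\mathcal{E}_m^{k}(i\,\cdot)$ is an eigenfunction of the operator $\widetilde{T}^{k}$ with eigenvalue $i(\widetilde{m}+k)$, and to move this eigenvalue onto the remaining factors in the integrand of $\mathcal{R}(n,m)$ by integration by parts. Starting from $2\pi i\,\mathcal{R}(n,m)=\int_{-\pi}^{\pi}h(x)\mathcal{E}_{m}^{k}(ix)\mathcal{E}_{n}^{k}(-ix)\delta_{k}(x)\,dx$ with $h(x)=\mathrm{sign}(x)e^{-ix/2}$, the key preliminary step is the adjointness identity
$$\int_{-\pi}^{\pi}\widetilde{T}^{k}(u)(x)\,v(-x)\,\delta_{k}(x)\,dx \;=\; \int_{-\pi}^{\pi}u(x)\,\widetilde{T}^{k}(v)(-x)\,\delta_{k}(x)\,dx,$$
derived by writing $\widetilde{T}^{k}(u)=u'+(ki+k\cot x)(u(x)-u(-x))$, integrating the derivative part by parts with $\delta_{k}'(x)=2k\cot x\,\delta_{k}(x)$, and performing $x\mapsto -x$ in the reflection term. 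Boundary contributions at $x=\pm\pi$ drop because $\delta_{k}(\pm\pi)=0$, and for $k>0$ the same vanishing at $x=0$ will annihilate the distributional jump of $h$ when we use this identity with a non-smooth test function.

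Next I apply the identity to $u(x)=\mathcal{E}_{m}^{k}(ix)$ and to the test function $v$ defined by $v(-x)=h(x)\mathcal{E}_{n}^{k}(-ix)$, i.e.\ $v(x)=h(-x)\mathcal{E}_{n}^{k}(ix)=-h(x)e^{ix}\mathcal{E}_{n}^{k}(ix)$. The left-hand side is immediately $i(\widetilde{m}+k)\cdot 2\pi i\,\mathcal{R}(n,m)$. To compute $\widetilde{T}^{k}(v)(-x)$, the smooth part of $h'$ supplies the factor $-\frac{i}{2}h$, and the derivative of $\mathcal{E}_{n}^{k}$ at $-ix$ is rewritten via the Cherednik eigenvalue equation $T^{k}(\mathcal{E}_{n}^{k})=\widetilde{n}\,\mathcal{E}_{n}^{k}$ evaluated at $y=-ix$ (using $\frac{1}{1-e^{2ix}}=\tfrac{1}{2}+\tfrac{i\cot x}{2}$). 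The nonlocal reflection piece $(ki-k\cot x)(v(-x)-v(x))$ is expanded with $h(-x)=-h(x)e^{ix}$; the two $\mathcal{E}_{n}^{k}(-ix)$ contributions from $v'(-x)$ and from the reflection term will cancel, leaving the clean expression
$$\widetilde{T}^{k}(v)(-x)\;=\;i\bigl(\widetilde{n}+k+\tfrac12\bigr)h(x)\mathcal{E}_{n}^{k}(-ix)\;+\;(ki-k\cot x)(1+e^{ix})h(x)\mathcal{E}_{n}^{k}(ix).$$
Equating the two sides of the adjointness identity produces $(\widetilde{m}-\widetilde{n}-\tfrac12)\mathcal{R}(n,m)$ on the one hand and a single residual integral $I$ on the other.

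The last step is to simplify the kernel of $I$. Using $ki-k\cot x=-ke^{-ix}/\sin x$, $1+e^{-ix}=2e^{-ix/2}\cos(x/2)$, and $\sin x=2\sin(x/2)\cos(x/2)$, the combination $(ki-k\cot x)(1+e^{ix})h(x)$ collapses to a constant multiple of $ke^{-ix}\cos(x/2)/|\sin(x/2)|$ (the signs of $\mathrm{sign}(x)$ and $\sin(x/2)$ match on $(-\pi,\pi)$, turning the awkward $\mathrm{sign}(x)$ into an absolute value). Finally, the weight identity $\delta_{k}(x)=|2\sin x|\,\delta_{k-1/2}(x)=4|\sin(x/2)|\cos(x/2)\,\delta_{k-1/2}(x)$ converts $\delta_{k}/|\sin(x/2)|$ into $4\cos(x/2)\delta_{k-1/2}(x)$, which yields the stated integral on the right-hand side of the lemma after dividing by $-2\pi$.

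The delicate step I expect to be the main obstacle is the computation of $\widetilde{T}^{k}(v)(-x)$: it requires juggling the smooth and distributional parts of $h'$, the eigenvalue relation applied at a complex argument, and the reflection term, and one must verify the cancellation that suppresses $\mathcal{E}_{n}^{k}(-ix)$ inside $I$ (this is what explains why the right-hand side of the lemma involves $\mathcal{E}_{n}^{k}$ evaluated only at $ix$). A secondary technical point is to confirm that the Dirac mass in $h'$ at $x=0$ contributes nothing to the integral because $\delta_{k}(0)=0$, so that the formal integration by parts is legitimate.
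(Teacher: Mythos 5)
Your proposal is correct and follows essentially the same route as the paper: both start from Proposition~\ref{Tt}, transfer the eigenvalue $i(\widetilde{m}+k)$ by integration by parts against the twisted pairing $\int u(x)v(-x)\delta_k(x)\,dx$ (your adjointness identity is just a cleaner packaging of the paper's inline computation of the derivative and reflection terms), and identify the residual integral with the right-hand side via $ki-k\cot x=-ke^{-ix}/\sin x$ and the weight reduction $\delta_k=4|\sin(x/2)|\cos(x/2)\,\delta_{k-1/2}$. Only cosmetic slips: the relevant factor is $1+e^{ix}=2e^{ix/2}\cos(x/2)$ (not $1+e^{-ix}$), and the combination $(ki-k\cot x)(1+e^{ix})h(x)$ collapses to $-ke^{-ix}/|\sin(x/2)|$ with no residual $\cos(x/2)$ --- consistent with the single $\cos(x/2)$ in the final integral --- so your displayed intermediate form has one spurious $\cos(x/2)$.
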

\begin{proof}    From the proposition  \ref{Tt}, we have
$$i(\widetilde{m} +k)\mathcal{R}(n,m)= \frac{1}{2\pi i}\int_{-\pi}^{\pi} h(x)\widetilde{T^k}\Big(\mathcal{E}_m(ix)\Big)\mathcal{E}_n^k(-ix) \delta_{k}(x)dx.$$
Integrating by parts  yields
\begin{eqnarray*}
  && \int_{-\pi}^{\pi}h(x)\Big(\mathcal{E}_m^k(ix)\Big)'\mathcal{E}_n^k(-ix)\delta_k(x)dx=  \\&& \qquad\qquad \frac{i}{2}\int_{-\pi}^{\pi}h(x)\mathcal{E}_m^k(ix)\mathcal{E}_n^k(-ix)\delta_k(x)dx
-\int_{-\pi}^{\pi}h(x)\mathcal{E}_m^k(ix)\Big(\mathcal{E}_n^k(-ix)\Big)' \delta_k(x)dx\\
&&\qquad\qquad-2k \int_{-\pi}^{\pi}h(x)\mathcal{E}_m^k(ix)\mathcal{E}_n^k(-ix)\cot(x)\delta_k(x)dx.
\end{eqnarray*}
On the other hand, one can make the following
\begin{eqnarray*}
&&2ki\int_{-\pi}^{\pi}h(x)\frac{\mathcal{E}_m^k(ix)-\mathcal{E}_m^k(-ix)}{1-e^{-2ix}}\mathcal{E}_n^k(-ix)\delta_k(x)dx
\\&&\qquad\qquad\qquad= 2ki\int_{-\pi}^{\pi}\mathcal{E}_m^k(ix)\left\{ \frac{h(x)\mathcal{E}_n^k(-ix) }{1-e^{-2ix}}
- \frac{h(-x)\mathcal{E}_n^k(ix) }{1-e^{2ix}}\right\} \delta_k(x)dx
\\&&\qquad\qquad\qquad= 2k\int_{-\pi}^{\pi}h(x)\mathcal{E}_m^k(ix)\mathcal{E}_n^k(-ix)\cot( x)\delta_k(x)dx
\\&&\qquad\qquad\qquad\qquad\qquad+ 2ki \int_{-\pi}^{\pi}h(x)\mathcal{E}_m^k(ix)
 \frac{ \mathcal{E}_n^k(-ix)-\mathcal{E}_n^k(ix) }{1-e^{2ix}}  \delta_k(x)dx
\\&&\qquad\qquad\qquad\qquad\qquad-2k\int_{-\pi}^{\pi}e^{-ix}\mathcal{E}_m^k(ix)\mathcal{E}_n^k(ix) \cos (x/2) \delta_{k-1/2}(x)dx.
 \end{eqnarray*}
 Therefore we obtain
\begin{eqnarray*}
&& \int_{-\pi}^{\pi}h(x) \widetilde{T^k}\Big(\mathcal{E}_m^k(ix)\Big)\mathcal{E}_n^k(-ix)\delta_k(x)dx =-
\int_{-\pi}^{\pi}h(x)\mathcal{E}_m^k(ix)
 \overline{ \widetilde{T^k}\Big(\mathcal{E}_n(ix)\Big)}\Lambda_k(x)dx-
  \\&&2k\int_{-\pi}^{\pi}e^{-ix}\mathcal{E}_m^k(ix)\mathcal{E}_n^k(ix)\cos(x/2)\delta_{k-1/2}(x)dx
 +\frac{i}{2}\int_{-\pi}^{\pi}h(x)\mathcal{E}_m^k(ix)\mathcal{E}_n^k(-ix)\delta_k(x)dx
\end{eqnarray*}
and, since
$$\int_{-\pi}^{\pi}h(x)\mathcal{E}_m^k(ix)
 \overline{ \widetilde{T^k}\Big(\mathcal{E}_n(ix)\Big)}\Lambda_k(x)dx+
= -i(\widetilde{n}+k) \mathcal{R}(m,n)$$
then   the lemma follows.
\end{proof}
  As an immediately  consequence of Lemma \ref{l1} we have the following estimate.
\begin{cor}
For all $m,n\in \mathbb{Z}$, such that $n\neq  m$ we have
\begin{equation}\label{kmn}
     |\mathcal{R}(n,m)|\leq \frac{C}{|n-m|},
\end{equation}
for some constant $C$.
\end{cor}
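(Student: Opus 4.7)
The plan is to extract the desired estimate directly from Lemma~\ref{l1}, rewritten as
\[
\mathcal{R}(n,m)=\frac{k}{\pi\bigl(\widetilde{m}-\widetilde{n}-\tfrac12\bigr)}\int_{-\pi}^{\pi}e^{-ix}\mathcal{E}_m^k(ix)\mathcal{E}_n^k(ix)\cos(x/2)\delta_{k-1/2}(x)\,dx.
\]
It then suffices to obtain a lower bound $|\widetilde{m}-\widetilde{n}-1/2|\ge c|m-n|$ for $m\ne n$ together with a uniform upper bound on the integral; the case $k=0$ is a direct Fourier computation giving $\mathcal{R}(n,m)=1/(\pi(m-n-1/2))$, so I may assume $k>0$.

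For the scalar factor I would do a short case analysis on the signs of $m$ and $n$, using $\widetilde{j}=j+k$ for $j\ge 0$ and $\widetilde{j}=j-k$ for $j<0$. When $m,n$ have the same sign, the parameter shifts cancel and $|\widetilde{m}-\widetilde{n}-1/2|=|m-n-1/2|\ge|m-n|-1/2\ge|m-n|/2$, since $|m-n|\ge 1$. When $m,n$ have opposite signs, $\widetilde{m}-\widetilde{n}$ differs from $m-n$ by $\pm 2k$; but now $|m-n|\ge 2$, so the inequality $|\widetilde{m}-\widetilde{n}-1/2|\ge|m-n|/2$ still holds. For the integral, Cauchy--Schwarz against the positive weight $w(x)=\cos(x/2)\delta_{k-1/2}(x)$ reduces the matter to proving the uniform bound
\[
I(n):=\int_{-\pi}^{\pi}|\mathcal{E}_n^k(ix)|^2\,w(x)\,dx\le C \qquad \text{for all } n\in\mathbb{Z},
\]
after which the two estimates combine to give $|\mathcal{R}(n,m)|\le(2k/\pi)\,C/|m-n|$, which is \eqref{kmn}.

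The main obstacle is the uniform estimate $\sup_n I(n)<\infty$. A direct appeal to orthonormality $\int|\mathcal{E}_n^k(ix)|^2\delta_k(x)\,dx=2\pi$ fails, because the ratio $w(x)/\delta_k(x)=1/(4|\sin(x/2)|)$ is unbounded at the origin while $|\mathcal{E}_n^k(0)|^2$ grows like $n^{2k}$, as can be read off the explicit formulas for $E_n^k(0)$ and $\|E_n^k\|_k$ recorded in Section~2.1. I would therefore split $I(n)$ at $|x|\asymp 1/n$: on $|x|\lesssim 1/n$, the pointwise bound $|\mathcal{E}_n^k(ix)|^2\le Cn^{2k}$ together with the integrable singularity $w(x)\sim|x|^{2k-1}$ produces an $O(1)$ contribution; on $|x|\gtrsim 1/n$, I would invoke the Jacobi representation $E_n^k(x)=E_n^k(0)\{P_{|n|}^k(x)+\tfrac{\widetilde{n}+2k}{2k+1}\sinh(x)\,P_{|n|-1}^{k+1}(x)\}$ from Section~2.1, combined with classical Szeg\H o--type pointwise estimates for the normalized Jacobi polynomials, to control $|\mathcal{E}_n^k(ix)|$ uniformly in $n$ away from zero.
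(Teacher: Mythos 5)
The reduction you set up is the natural one, and it is what the paper implicitly intends (the paper offers no proof at all, declaring the estimate an immediate consequence of Lemma~\ref{l1}): divide by $\widetilde{m}-\widetilde{n}-1/2$, bound that factor below by $c\,|m-n|$, and bound the integral uniformly in $m,n$. Your treatment of the scalar factor is essentially correct (the intermediate claim that $|m-n|\ge 2$ when $m,n$ have opposite signs fails for, e.g., $m=0$, $n=-1$, but the inequality $|\widetilde{m}-\widetilde{n}-1/2|\ge |m-n|/2$ survives because the extra shift $\pm 2k$ always has the favourable sign), and your remark that $k=0$ needs separate treatment is a genuinely useful observation: for $k=0$ Lemma~\ref{l1} silently drops a boundary term at $x=0$ coming from the jump of $h$, which vanishes only because $\delta_k(0)=0$ when $k>0$. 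The genuine gap is in the last step: the uniform bound $\sup_n I(n)<\infty$ that your Cauchy--Schwarz reduction requires is \emph{false}. Writing $w(x)=\cos(x/2)\,\delta_{k-1/2}(x)=\delta_k(x)/(4|\sin(x/2)|)$ on $(-\pi,\pi)$, the Szeg\H{o}-type estimates you invoke show that $|\mathcal{E}_n^k(ix)|^2\,\delta_k(x)$ is of order one on average for $|x|\gtrsim 1/|n|$ (it cannot decay there, since its total integral equals $2\pi$ and the mass is equidistributed), so
\[
I(n)\;\asymp\;O(1)+\int_{1/|n|}^{1}\frac{dx}{4\sin(x/2)}\;\asymp\;\log|n|.
\]
Indeed your own splitting at $|x|\asymp 1/n$ produces exactly this: the inner piece is $O(1)$ as you say, but the outer piece is $\int_{1/n}^{1}|x|^{-2k}\cdot|x|^{2k-1}\,dx=\log n$, not $O(1)$. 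Cauchy--Schwarz therefore only yields $|\mathcal{R}(n,m)|\lesssim \sqrt{\log|m|\,\log|n|}\,/\,|m-n|$.

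The loss comes from putting absolute values inside the integral: the uniform boundedness of $\int_{-\pi}^{\pi}e^{-ix}\mathcal{E}_m^k(ix)\mathcal{E}_n^k(ix)\cos(x/2)\delta_{k-1/2}(x)\,dx$ is a cancellation statement, not a size statement. (On the diagonal, for instance, Lemma~\ref{l1} identifies this integral with $-\tfrac{\pi}{2k}\mathcal{R}(n,n)$, which is bounded by $\pi/k$ directly from the definition of $\mathcal{R}$ since $|e^{-ix/2}|=1$ and $\|\mathcal{E}_n^k\|_k=1$; yet $I(n)\to\infty$, so Cauchy--Schwarz is provably lossy here.) To close the argument one must keep the oscillating product $\mathcal{E}_m^k(ix)\mathcal{E}_n^k(ix)$ intact near the singularity at $x=0$ --- for example, split at $|x|\asymp(|m|+|n|+1)^{-1}$ and exploit the oscillation (the relevant phases are of size comparable to $|m|+|n|$), or use the explicit asymptotics of the underlying Gegenbauer-type polynomials as is done in \cite{B} and \cite{AA}. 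As it stands, your proposal does not prove the corollary.
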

\begin{lem}
The  Riesz kernel $\mathcal{R}$ satisfies
$$|\mathcal{R}(n,m+1)-\mathcal{R}(n,m)|\leq \frac{C}{||m|-|n||^2}, \qquad \frac{|n|}{2}\leq  |m|\leq \frac{3|n|}{2}.$$
\end{lem}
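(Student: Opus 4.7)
My strategy is to leverage Lemma~\ref{l1} as a ``divisor gain'' device: it allows us to write $\mathcal{R}(n,m)$ as $\frac{k/\pi}{\widetilde{m}-\widetilde{n}-1/2}\,I(n,m)$, where $I(n,m)$ denotes the integral appearing on the right-hand side of Lemma~\ref{l1}. Straightforward algebra (using $\widetilde{m+1}-\widetilde{m}=1$ for $m\neq -1$, which is the case when $|n|/2\leq|m|\leq 3|n|/2$ and $|n|$ is large) yields
\begin{equation*}
\mathcal{R}(n,m+1)-\mathcal{R}(n,m)=\frac{k/\pi}{\widetilde{m+1}-\widetilde{n}-1/2}\bigl[I(n,m+1)-I(n,m)\bigr]-\frac{k/\pi\cdot I(n,m)}{(\widetilde{m+1}-\widetilde{n}-1/2)(\widetilde{m}-\widetilde{n}-1/2)}.
\end{equation*}
Observe that in the prescribed range one has $|\widetilde{m}-\widetilde{n}|\geq c\,||m|-|n||$: when $m,n$ have the same sign this is equality, while for opposite signs $|\widetilde{m}-\widetilde{n}|=|m|+|n|+O(1)$, which dominates $||m|-|n||$.

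The second summand is immediate. Combining Lemma~\ref{l1} with the already established bound $|\mathcal{R}(n,m)|\leq C/|n-m|$ from \eqref{kmn}, together with $|\widetilde{m}-\widetilde{n}|\leq C|n-m|$, we get $|I(n,m)|\leq C$ uniformly. Hence the second summand is bounded by $C/|\widetilde{m}-\widetilde{n}|^{2}\leq C/||m|-|n||^{2}$, as required. The entire difficulty is therefore concentrated in proving
\begin{equation*}
|I(n,m+1)-I(n,m)|\leq \frac{C}{||m|-|n||}.
\end{equation*}

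My approach to this is to substitute the recurrence \eqref{eE}, namely $e^{-ix}\mathcal{E}_{m+1}^{k}(ix)=\alpha_{m}\mathcal{E}_{m}^{k}(ix)-\beta_{-m}\mathcal{E}_{-m}^{k}(ix)$, into $I(n,m+1)$, and the analogous identity (with $m$ replaced by $m-1$) into $I(n,m)$. This rewrites the difference as a linear combination of integrals of the form $J(n,\ell)=\int_{-\pi}^{\pi}\mathcal{E}_{\ell}^{k}(ix)\mathcal{E}_{n}^{k}(ix)\cos(x/2)\,\delta_{k-1/2}(x)\,dx$ with $\ell\in\{m-1,m,1-m,-m\}$, weighted by the coefficients $\alpha_{\cdot},\beta_{\cdot}$; crucially the telescoping piece $\alpha_{m-1}[J(n,m)-J(n,m-1)]$ is paired with the smooth main term, while the ``reflection'' contributions $\beta_{-m}J(n,-m)$ and $\beta_{1-m}J(n,1-m)$ are already $O(1/|m|)$ simply from $\beta_{\pm}=O(1/|m|)$ and $|J|\leq C$. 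The main obstacle is then to gain the extra decay factor for $J(n,m)-J(n,m-1)$ (and for $J(n,m)$ itself, when $m,n$ have opposite signs so that $|m-n|\gg||m|-|n||$). I would obtain this by repeating the same integration-by-parts argument as in Lemma~\ref{l1}, now applied to $J$: the operator $\widetilde{T}^{k}$ acts diagonally on $\mathcal{E}_{m}^{k}(i\cdot)$ with eigenvalue $i(\widetilde{m}+k)$ (Proposition~\ref{Tt}), so moving $\widetilde{T}^{k}$ from one factor to the other produces an identity of the schematic form $(\widetilde{m}+\widetilde{n}+\text{const})J(n,\ell)=\text{(bounded integral)}$, yielding $|J|\leq C/(|m|+|n|)$ in the opposite-sign case, while in the same-sign case the desired gain $1/||m|-|n||$ for the telescoped difference $J(n,m)-J(n,m-1)$ follows by the same device applied to $\mathcal{E}_{m}^{k}(ix)-\mathcal{E}_{m-1}^{k}(ix)$ combined with the recurrence \eqref{eE}.

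The principal technical obstacle is the clean execution of this second integration by parts: the weight $\cos(x/2)\delta_{k-1/2}(x)$ is not the natural weight for $\widetilde{T}^{k}$, so one must carefully track the non-local ``reflection'' term $2ki\,(u(x)-u(-x))/(1-e^{-2ix})$ together with the boundary contributions at $\pm\pi$ (which cancel by $2\pi$-periodicity of $\mathcal{E}_{m}^{k}(ix)\mathcal{E}_{n}^{k}(ix)$ and the evenness of the weight), just as was done in the proof of Lemma~\ref{l1}. Assuming this calculation produces the expected identity, combining the two decay estimates yields $|I(n,m+1)-I(n,m)|\leq C/||m|-|n||$ and hence the claimed smoothness bound on the Riesz kernel $\mathcal{R}$.
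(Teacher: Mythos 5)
Your reduction is sound as far as it goes: writing $\mathcal{R}(n,m)=\frac{k/\pi}{\widetilde{m}-\widetilde{n}-1/2}I(n,m)$, bounding the ``discrete quotient rule'' remainder by $C/||m|-|n||^{2}$ via $|I(n,m)|\leq C$ and \eqref{kmn}, and reducing everything to $|I(n,m+1)-I(n,m)|\leq C/||m|-|n||$ is exactly the right frame, and up to this point you are walking the same path as the paper. The divergence --- and the gap --- is in how you treat $I(n,m)$. You apply the recurrence \eqref{eE} to \emph{both} $I(n,m+1)$ and $I(n,m)$, which leaves you with the genuinely new quantity $J(n,m)-J(n,m-1)$ (with $J(n,\ell)=\int_{-\pi}^{\pi}\mathcal{E}_{\ell}^{k}(ix)\mathcal{E}_{n}^{k}(ix)\cos(x/2)\delta_{k-1/2}(x)\,dx$), and your plan for it is a second integration by parts against the weight $\cos(x/2)\delta_{k-1/2}$, which you yourself only ``assume'' produces a bounded right-hand side. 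That assumption is where the proof is missing. If you carry out the transposition of $\widetilde{T}^{k}$ in $J$, the logarithmic derivative of the weight contributes $(2k-1)\cot x$ while the reflection term contributes $2k\cot x$ acting on the \emph{reflected} factor $\mathcal{E}_{n}^{k}(-ix)$; these do not cancel (they did in Lemma~\ref{l1} precisely because the weight there was $h(x)\delta_{k}(x)$), and you are left with a residual integral against $\cot(x)\cos(x/2)\delta_{k-1/2}(x)\sim |x|^{2k-2}$ near the origin, which is not even absolutely integrable for $k<1/2$ and whose uniform boundedness in $\ell,n$ is exactly as hard as the original problem. So the ``schematic identity'' $(\widetilde{\ell}+\widetilde{n}+\mathrm{const})J(n,\ell)=O(1)$ is not established, and with it the whole endgame.

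The paper avoids this entirely by a small but decisive trick: instead of applying \eqref{eE} to $I(n,m)$, it uses the pointwise identity $(e^{-ix}-1)\cos(x/2)\delta_{k-1/2}(x)=\mathrm{const}\cdot h(x)\delta_{k}(x)$ to split $I(n,m)=J(n,m)+\mathrm{const}\cdot\int_{-\pi}^{\pi}h(x)\mathcal{E}_{m}^{k}(ix)\mathcal{E}_{n}^{k}(ix)\delta_{k}(x)\,dx$. Then $I(n,m+1)-I(n,m)=(\alpha_{m}-1)J(n,m)+\beta_{m}J(n,-m)-\mathrm{const}\cdot\int h\,\mathcal{E}_{m}^{k}\mathcal{E}_{n}^{k}\delta_{k}$: the $J(n,m)$ terms cancel up to the factor $\alpha_{m}-1=O(1/|m|)$, so only the crude bound $|J|\leq C$ is ever needed, and the last integral is recognized (via \eqref{12}) as a Riesz kernel entry $\mathcal{R}(n,1-m)$ already controlled by \eqref{kmn} with $|n+m|\geq ||n|-|m||$. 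I would redo your final step along these lines: keep your decomposition of $I(n,m+1)$ but replace the treatment of $I(n,m)$ by this identity, which removes the need for any second integration by parts.
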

\begin{proof}
Let us write
\begin{eqnarray*}
&& \int_{-\pi}^{\pi}e^{-ix}\mathcal{E}_m^k(ix)\mathcal{E}_n^k(ix) \cos (x/2) \delta_{k-1/2}(x)dx
\\&=&\int_{-\pi}^{\pi}(e^{-ix}-1) \cos (x/2) \mathcal{E}_m^k(ix)\mathcal{E}_n^k(ix)  \delta_{k-1/2 }(x)dx+\int_{-\pi}^{\pi}\mathcal{E}_m^k(ix)\mathcal{E}_n^k(ix) \cos (x/2)\delta_{k-1/2}(x)dx
\\&=& -i\int_{-\pi}^{\pi} h(x) \mathcal{E}_m^k(ix)\mathcal{E}_n^k(ix)  \delta_{k }(x)dx+\int_{-\pi}^{\pi}\mathcal{E}_m^k(ix)\mathcal{E}_n^k(ix) \cos (x/2)\delta_{k-1/2}(x)dx
\end{eqnarray*}
From Lemma \ref{l1} and the formula (\ref{eE}),
 $$(\widetilde{m+1}-\widetilde{n}-1/2)\mathcal{R}( n,m+1)=\frac{k}{\pi}\int_{-\pi}^{\pi}e^{-ix} \mathcal{E}_{m+1}^k(ix)\mathcal{E}_n^k(ix) \cos (x/2)\delta_{k-1/2}(x)dx$$
$$=\frac{k\alpha_m}{\pi} \int_{-\pi}^{\pi} \mathcal{E}_{m}^k(ix)\mathcal{E}_n^k(ix) \cos (x/2)\delta_{k-1/2}(x)dx+
  \frac{k\beta_m}{\pi} \int_{-\pi}^{\pi}\mathcal{E}_{-m}^k(ix)\mathcal{E}_n^k(ix) \cos (x/2)\delta_{k-1/2}(x)dx.$$
We then get
\begin{eqnarray*}
&&(\widetilde{m+1}-\widetilde{n}-1/2)\mathcal{R}(m+1,n)-(\widetilde{m}-\widetilde{n}-1/2)\mathcal{R}(m,n)
\\&=& \frac{k(\alpha_m-1)}{\pi}\int_{-\pi}^{\pi} \mathcal{E}_{m}^k(ix)\mathcal{E}_n^k(ix) \cos (x/2)\delta_{k-1/2}(x)dx
\\&&+ \frac{k\beta_m}{\pi} \int_{-\pi}^{\pi}\mathcal{E}_{-m}^k(ix)\mathcal{E}_n^k(ix) \cos (x/2)\delta_{k-1/2}(x)dx+
i\frac{k}{\pi}\int_{-\pi}^{\pi} h(x)  \mathcal{E}_m(ix)^k\mathcal{E}_n^k(ix)   \delta_{k }(x)dx.
\end{eqnarray*}
It follows that
\begin{eqnarray*}
&&(\widetilde{m+1}-\widetilde{n}-1/2)\Big(\mathcal{R}( n,m+1)-\mathcal{R}(n,m+1)\Big)=(\widetilde{m}-\widetilde{m+1} )\mathcal{R}(m,n)
\\&& + \frac{k(\alpha_m-1)}{\pi}\int_{-\pi}^{\pi} \mathcal{E}_{m}^k(ix)\mathcal{E}_n^k(ix) \cos (x/2)\delta_{k-1/2}(x)dx
\\&&+ \frac{k\beta_m}{\pi} \int_{-\pi}^{\pi}\mathcal{E}_{-m}^k(ix)\mathcal{E}_n^k(ix) \cos (x/2)\delta_{k-1/2}(x)dx-
i\frac{k}{\pi}\int_{-\pi}^{\pi} h(x)  \mathcal{E}_m^k(ix)\mathcal{E}_n^k(ix)   \delta_{k }(x)dx.
\end{eqnarray*}
From   (\ref{kmn}) we have
$$|(\widetilde{m}-\widetilde{m+1} )\mathcal{R}(n,m)|\leq (2k+1)|\mathcal{R}(n,m)|\leq \frac{C}{|m-n|}$$
and in view of (\ref{alpha})
$$|\alpha_m-1|= \frac{k^2}{(|m|+k)\Big(\sqrt{|m|(|m|+k)}+|m|+k\Big)} \leq \frac{C}{|m|}\leq \frac{C}{||n|-|m||}$$
and
$$|\beta_m|\leq \frac{1}{|m|} \leq  \frac{C}{||n|-|m||}, $$
since for  $|n|/2 \leq |m|\leq 3|n|/2 $, we have that  $||n|-|m||\leq |n|/2\leq |m|$.
Moreover, by noting  that
$$ \int_{-\pi}^{\pi} h(x)  \mathcal{E}_m^k(ix)\mathcal{E}_n^k(ix)   \delta_{k }(x)dx= \overline{\mathcal{R}(n,-m+1)}$$
so,   by Lemma \ref{l1} one can  obtain
 $$|\mathcal{R}(n,-m+1)|\leq \frac{C}{|n+m|}\leq \frac{C}{||n|-|m||}.$$
Finally as
$$\frac{1}{|\widetilde{m+1}-\widetilde{n}-1/2|}\leq \frac{C}{|n-m|}\leq \frac{C}{||n|-|m||}$$
the lemma is concluded.
\end{proof}
\begin{thm}
  The kernel $K$ satisfies
$$|K(n,\ell)-K(n,m)|\leq C \frac{|\ell-m|}{||m|-|n||^2},$$
For all $n,m\in \mathbb{Z}$ such that  $ \frac{|n|}{2}\leq  |m|,|\ell|\leq \frac{3|n|}{2}$ and $||n|-|m||\geq 2|m-\ell|$.
\end{thm}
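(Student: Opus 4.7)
The plan is to reduce the difference $|\mathcal{R}(n,\ell)-\mathcal{R}(n,m)|$ (the theorem's $K$ is clearly the Riesz kernel $\mathcal{R}$) to a telescoping sum of consecutive differences and then invoke the previous lemma. The hypothesis $|n|/2\leq |m|,|\ell|\leq 3|n|/2$ together with $||n|-|m||\geq 2|m-\ell|$ should place all intermediate indices in exactly the regime where that lemma applies, and should also keep the denominator under control.

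Before summing, I would first dispose of a sign issue: I claim $m$ and $\ell$ must lie in the same half-line. Indeed, if they had opposite signs, then $|m-\ell|=|m|+|\ell|\geq 2|m|/2+|m|/2$ (using $|\ell|\geq |n|/2$ and $|m|\geq |n|/2$), forcing $||n|-|m||\geq 2|m-\ell|\geq 2|n|$, which contradicts $|m|\leq 3|n|/2$. With this in hand, and assuming WLOG $m<\ell$ (and both positive, the negative case being symmetric), I would write
\begin{equation*}
\mathcal{R}(n,\ell)-\mathcal{R}(n,m)=\sum_{j=m}^{\ell-1}\bigl(\mathcal{R}(n,j+1)-\mathcal{R}(n,j)\bigr),
\end{equation*}
noting that every intermediate $j$ satisfies $|n|/2\leq j\leq 3|n|/2$ since it lies between $m$ and $\ell$.

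Next I would apply the previous lemma term by term, giving $|\mathcal{R}(n,j+1)-\mathcal{R}(n,j)|\leq C/||j|-|n||^{2}$. The key geometric fact to verify is that
\begin{equation*}
 ||j|-|n||\geq \tfrac12\,||m|-|n||\qquad\text{for all } j\text{ between } m\text{ and }\ell.
\end{equation*}
This follows because the hypothesis $||n|-|m||\geq 2|m-\ell|$ forces $|n|$ to lie entirely outside the interval $[m,\ell]$ (say $|n|>\ell$); then $||j|-|n||=|n|-j\geq |n|-\ell$, and $|n|-\ell=(|n|-|m|)-(\ell-|m|)\geq ||n|-|m||-|m-\ell|\geq \tfrac12 ||n|-|m||$.

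Combining these pieces gives
\begin{equation*}
|\mathcal{R}(n,\ell)-\mathcal{R}(n,m)|\leq \sum_{j=m}^{\ell-1}\frac{C}{||j|-|n||^{2}}\leq \frac{4C\,|\ell-m|}{||m|-|n||^{2}},
\end{equation*}
which is the claimed estimate. The main (and really only) technical obstacle is the geometric step of showing that $|n|$ lies outside $[m,\ell]$ and that the distance from $|n|$ to any intermediate $|j|$ is comparable to $||m|-|n||$; once this is established the telescoping argument together with the previously proved one-step estimate finishes the proof.
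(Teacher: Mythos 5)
Your proof is correct and follows essentially the same route as the paper: establish that $m$ and $\ell$ must have the same sign, telescope the difference into consecutive steps, apply the one-step lemma, and use the geometric fact that $||j|-|n||\geq \tfrac12||m|-|n||$ for intermediate $j$. You actually spell out the sign argument and the comparability of distances more carefully than the paper does, but the underlying argument is identical.
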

\begin{proof}
Assume that  $|m|<|\ell|$.
  Let us first making the following observation.
 Considering the conditions:  $||n|-|m||\geq 2|m-\ell|$  and   $ \frac{|n|}{2}\leq  |m|,|\ell|\leq \frac{3|n|}{2}$
 the numbers $ m $ and $ \ell $ must have the same sign. Then for all integer $h$ between $m$ and $\ell$
$$||n|-|h||\geq   ||n|-|\ell||\geq \frac{||n|-|m||}{2}. $$
 Consequently,
\begin{eqnarray*}
 |\mathcal{R}(n,\ell)-\mathcal{R}(n,m)|&\leq& \sum_{|m|\leq |h|\leq  |\ell|-1} |K(n, h+1)-K(n,h)|
\\&\leq &C \sum_{|m|\leq |h|\leq |\ell|-1} \frac{1}{||n|-|h||^2}
\\&\leq & C \frac{|m-\ell|}{||n|-|m||^2},
\end{eqnarray*}
 which is the desired estimate.
\end{proof}
\par Now  we can apply  the same arguments   to get  the estimate,
\begin{eqnarray*}
 |\mathcal{R}(m,n)-\mathcal{R}(\ell,n)| \leq  C \frac{|m-\ell|}{||n|-|m||^2}.
\end{eqnarray*}
As the Riesz transforms are bounded on $\ell^2(\mathbb{Z})$ we can then invoke the theorem \ref{ttmm} and
  finally we  state the following.
\begin{thm}
  The generalized discrete Riesz transforms $\mathcal{R}$ and $\mathcal{R}^*$ are bounded operators on $\ell^p(\mathbb{Z})$ for all
$1<p<\infty.$
\end{thm}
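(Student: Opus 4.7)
The plan is to invoke Theorem~\ref{3.3} for each of $\mathcal{R}$ and $\mathcal{R}^*$, since all of its hypotheses have now been verified in the preceding results. First, $\ell^2$-boundedness of both operators is immediate from the multiplier characterization (\ref{riesz}) together with the Plancherel identity from the theorem on $\mathcal{F}_k$, because the symbols $-ie^{-ix/2}$ and $ie^{ix/2}$ have modulus one. Next, the explicit kernel representations of $\mathcal{R}(n,m)$ and $\mathcal{R}^*(n,m)$ computed after the definition (\ref{riesz}) furnish the required formula (\ref{Tf}) whenever $f(n)=f(-n)=0$.

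For $\mathcal{R}$, Condition (i) of Theorem~\ref{3.3}, namely $|\mathcal{R}(n,m)|\leq C/|n-m|$, is exactly the estimate (\ref{kmn}) derived as a corollary to Lemma~\ref{l1}. Condition (ii) requires the bound $|\mathcal{R}(n,m)-\mathcal{R}(n,\ell)|+|\mathcal{R}(m,n)-\mathcal{R}(\ell,n)|\leq C|m-\ell|/(|n|-|m|)^2$ in the regime $|n|/2\leq |m|,|\ell|\leq 3|n|/2$ with $||n|-|m||\geq 2|m-\ell|$. The first summand is obtained by telescoping the one-step estimate of the lemma just above, summed over the integers $h$ between $m$ and $\ell-1$; as noted in the subsequent theorem, the constraints force $m$ and $\ell$ to share a sign so that $||n|-|h||\geq \tfrac{1}{2}||n|-|m||$ uniformly along the summation, giving the desired bound. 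The second summand is handled by the completely symmetric argument indicated in the remark following that theorem. With (i) and (ii) in place, Theorem~\ref{3.3} yields $\ell^p$-boundedness of $\mathcal{R}$ for every $1<p<\infty$.

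For $\mathcal{R}^*$, the cleanest route is duality: inspecting (\ref{riesz}) shows that $\mathcal{R}^*$ is the $\ell^2$-adjoint of $\mathcal{R}$ (its multiplier is the complex conjugate of that of $\mathcal{R}$), so boundedness of $\mathcal{R}$ on $\ell^p(\mathbb{Z})$ for every $1<p<\infty$ automatically transfers to boundedness of $\mathcal{R}^*$ on the dual exponent, and hence on the whole range. Alternatively, one may reproduce the argument verbatim with $h(x)=\operatorname{sign}(x)e^{ix/2}$ in place of $\operatorname{sign}(x)e^{-ix/2}$; the kernel estimates are identical up to conjugation.

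The main obstacle is essentially encapsulated in the preceding two lemmas and has already been overcome: the pointwise bound $|\mathcal{R}(n,m)|\lesssim 1/|n-m|$ rests on the Cherednik eigen-identity of Proposition~\ref{Tt} together with an integration by parts that extracts the gap factor $\widetilde{m}-\widetilde{n}-1/2$, while the gradient-type bound on $|\mathcal{R}(n,m+1)-\mathcal{R}(n,m)|$ depends crucially on the three-term recurrence (\ref{eE}) in order to rewrite $e^{-ix}\mathcal{E}_{m+1}^k$ as a combination of $\mathcal{E}_m^k$ and $\mathcal{E}_{-m}^k$ and to control the coefficients via (\ref{alpha}). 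Once those two key estimates are in hand, the passage to the final $\ell^p$-boundedness is just a bookkeeping application of the discrete Calder\'on--Zygmund machinery of Theorems~\ref{ttmm} and \ref{3.3}.
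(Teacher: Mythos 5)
Your proposal is correct and follows essentially the same route as the paper: verify $\ell^2$-boundedness from the unimodular multiplier, establish conditions (i) and (ii) of Theorem~\ref{3.3} via the estimate (\ref{kmn}) and the telescoped one-step gradient bound, and conclude by the discrete Calder\'on--Zygmund machinery of Theorems~\ref{ttmm} and \ref{3.3}. Your treatment of $\mathcal{R}^*$ by duality (it is the $\ell^2$-adjoint of $\mathcal{R}$, since its multiplier is the complex conjugate) is a clean way to finish that the paper leaves implicit, but it does not change the substance of the argument.
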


 \end{document}